\newtheorem{theorem}{Theorem}[section]
\newtheorem{lemma}[theorem]{Lemma}
\newtheorem{proposition}[theorem]{Proposition}
\newtheorem{corollary}[theorem]{Corollary}
\theoremstyle{definition}
\newtheorem{example}[theorem]{Example}
\newtheorem{remark}[theorem]{Remark}
\newtheorem{construction}[theorem]{Construction}
\newtheorem*{acknowledgements}{Acknowledgements}
\numberwithin{equation}{section}
\def\bangle#1{\langle #1 \rangle}
\def\KK{{\mathbb K}}
\def\ZZ{{\mathbb Z}}
\def\NN{{\mathbb N}}
\def\QQ{{\mathbb Q}}
\def\Cl{\operatorname{Cl}}
\def\Pic{\operatorname{Pic}}
\def\Spec{{\rm Spec}}
\def\Aut{{\rm Aut}}
\def\gcd{\mathrm{gcd}}
\def\lcm{\mathrm{lcm}}
\begin{document}
\title[On Fano varieties with torus action of complexity one]%
{On Fano varieties with torus action of complexity one}
\author[E.~Herppich]{Elaine Herppich} 
\address{Mathematisches Institut, Universit\"at T\"ubingen,
Auf der Morgenstelle 10, 72076 T\"ubingen, Germany}
\email{elaine.herppich@uni-tuebingen.de}

\begin{abstract}
In this work we provide effective bounds
and classification results for 
rational $\QQ$-factorial Fano varieties
with a complexity-one torus action
and Picard number one depending on the 
invariants dimension and Picard index.
This complements earlier work
by Hausen, S\" u{\ss} and the author,
where the case of free divisor class group
of rank one was treated.
\end{abstract}

\subjclass[2000]{13A02, 13F15, 14L30}

\maketitle

\section{Statement of the results}

The subject of this paper are rational $\QQ$-factorial
Fano varieties $X$ defined over an algebraically closed
field $\KK$ of characteristic zero, see for example 
\cite{Is} and \cite{MoMu} for classical work. A more
recent focus in this field are toric Fano varieties,
where one uses the description in terms of lattice
polytopes (\cite{Ba}, \cite{KaKrNi}, \cite{Ka2}, etc.).
Here we study
the case that $X$ comes, more generally,
with an effective 
action of a torus $T$ of complexity one,
i.e. $\dim X-\dim T=1$; 
by Fano varieties we mean normal projective varieties
with ample anticanonical divisor $-K_X$.
We continue the work of \cite{HaHeSu}
where classification results for
the case $\Cl(X)=\ZZ$ were given.
{In this paper we study the
more general case of Picard number one,
i.e.~we allow torsion in the divisor class group.
A first step is Theorem \ref{Th:FiniteIndex} 
where we provide effective bounds for 
the number of deformation types 
of Fano varieties $X$ as above
with fixed dimension $d$ and Picard index 
$\mu:=[\Cl(X):\Pic(X)]$.
As a consequence we obtain restricting
statements about the number $\delta(d,\mu)$ 
of different deformation types of 
$\QQ$-factorial $d$-dimensional Fano varieties with a 
complexity-one torus action, Picard number one and
Picard index~$\mu$.
In the toric situation $\delta(d,\mu)$ is bounded above by  
$\mu^{d^2}$.
For the non-toric case we get the following asymptotic results:

\begin{theorem}\label{Th:asymptotic}
For fixed $d\in\ZZ_{>0}$,
the number $\delta(d,\mu)$ is asymptotically 
bounded above by $\mu^{(1+\varepsilon)\mu^2}$
for $\varepsilon>0$ arbitrarily small,
and for fixed $\mu\in\ZZ_{>0}$,
it is asymptotically 
bounded above  by $d^{Ad}$ 
with a constant $A$ depending only on $\mu$.
\end{theorem}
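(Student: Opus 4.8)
The plan is to deduce both asymptotic statements from the \emph{explicit} bound supplied by Theorem \ref{Th:FiniteIndex}: denoting that bound by $B(d,\mu)$, we have $\delta(d,\mu)\le B(d,\mu)$, and everything reduces to extracting the leading term of $\log B(d,\mu)$ in the two limits. I would first rewrite $B(d,\mu)$ as a product of two kinds of contributions and analyse $\log B=\log B_{\mathrm{comb}}+\log B_{\mathrm{grad}}$ separately. The \emph{combinatorial} factor $B_{\mathrm{comb}}$ counts the admissible discrete data of the Cox ring: the number $r+1$ of monomials in the trinomial relations, the block sizes $n_0,\dots,n_r\ge1$, the number $m\ge0$ of further variables, and the exponents $l_{ij}$; these are tied together by the dimension relation $\sum_i n_i+m=d+r$ and constrained by the Fano (platonicity) inequality, which I expect to force all but boundedly many blocks to carry trivial exponents and to bound the remaining exponents. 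The \emph{grading} factor $B_{\mathrm{grad}}$ counts, for each such configuration, the $\Cl(X)$-degree data compatible with Picard number one and index $\mu$; since the free rank is one, this is governed by the integer weights, bounded through the Fano condition, and by the torsion of $\Cl(X)$, whose order divides a quantity controlled by $\mu$.

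For fixed $d$ and $\mu\to\infty$ I expect $B_{\mathrm{grad}}$ to dominate. With $d$ held fixed the dimension relation and the Fano inequality bound the integer weights and the number of nontrivial blocks by constants, so $\log B_{\mathrm{comb}}$ grows at most polynomially in $\mu$ and is negligible. The quadratic exponent then comes from the torsion part of $B_{\mathrm{grad}}$: the number of degree slots that may be filled grows like $\mu^2$, while each admits $O(\mu)$ admissible values, so that $\log B_{\mathrm{grad}}=\bigl(1+o(1)\bigr)\mu^2\log\mu$. Absorbing the $o(\mu^2\log\mu)$ error together with $\log B_{\mathrm{comb}}$ into the factor $(1+\varepsilon)$ yields $\delta(d,\mu)\le\mu^{(1+\varepsilon)\mu^2}$ for every $\varepsilon>0$ and all sufficiently large $\mu$.

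For fixed $\mu$ and $d\to\infty$ the roles are reversed. Now the torsion is bounded, so $B_{\mathrm{grad}}$ contributes only a factor $\mu^{O(d)}=e^{O(d)}$, negligible against a term of order $d^{Ad}=e^{Ad\log d}$. The dominant contribution is $B_{\mathrm{comb}}$: the number of variables grows linearly in $d$, while the weights and the exponents in the boundedly many nontrivial blocks range, by the Fano inequality, over sets of size $\mathrm{poly}(d)$; counting these choices gives $\log B_{\mathrm{comb}}=A\,d\log d+o(d\log d)$, with a constant $A=A(\mu)$ read off from the number of nontrivial blocks and from the polynomial degree of the Fano bound on the weights. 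Together with the negligible grading term this gives $\delta(d,\mu)\le d^{Ad}$.

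The main obstacle is the sharp bookkeeping needed to pin the exponents to their claimed values rather than to mere $O(\cdot)$ estimates. Concretely, in the first regime one must show that the number of free degree parameters is $(1+o(1))\mu^2$ and that each ranges over $\mu^{1+o(1)}$ values, so that no constant worse than $1+\varepsilon$ is incurred; in the second regime one must verify that the Fano inequality genuinely confines the weights to a polynomial range in $d$, so that the base of the dominant super-exponential term is $d$ rather than a fixed constant. Both points are purely quantitative consequences of the explicit inequalities collected in Theorem \ref{Th:FiniteIndex}, combined with standard estimates for sums of logarithms of the Stirling type.
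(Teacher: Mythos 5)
Your overall strategy---bound $\delta(d,\mu)$ by counting the discrete Cox ring data admitted by Theorem \ref{Th:FiniteIndex} and then extract the asymptotics of the logarithm in the two regimes---is exactly the paper's proof; the paper simply records the explicit count $\mu^{\mu^2+3\mu+\xi(\mu)^2+\xi(6d\mu)+5d}(6d\mu)^{2\mu+2\xi(6d\mu)+3d-2}$ and reads off both statements from it. However, your accounting in the regime ``$d$ fixed, $\mu\to\infty$'' rests on a structural claim that is false and that Theorem \ref{Th:FiniteIndex} cannot deliver. You attribute the quadratic exponent to the torsion part of the grading data, asserting that ``the number of degree slots that may be filled grows like $\mu^2$, while each admits $O(\mu)$ admissible values,'' and accordingly you set yourself the task of showing that there are $(1+o(1))\mu^2$ free degree parameters, each ranging over $\mu^{1+o(1)}$ values. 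Neither half is true: in every case of Theorem \ref{Th:FiniteIndex} the number of generators---hence of degree slots, free or torsion---is at most roughly $\mu+\xi(6d\mu)+d$, which for fixed $d$ is $O(\mu)$, not $\mu^2$; and the torsion degrees take at most $|\Cl(X)^t|\le\mu$ values each, so the entire torsion bookkeeping contributes only $\mu^{O(\mu)}$. The genuine source of the term $\mu^{(1+o(1))\mu^2}$ is case (ii) (all relations in three variables, $n_0=1$, treated in Proposition \ref{prop:Finite3Var}): there the free parts of the weights are bounded only by $w_{i1}^0\le\mu^r$---exponentially large in $\mu$, not $O(\mu)$, and obtained without any use of the Fano condition---and there are $r+1\le\mu+\xi(\mu)$ of them, giving at most $(\mu^r)^{r+1}=\mu^{r(r+1)}\le\mu^{(1+o(1))\mu^2}$ once one uses $\xi(\mu)=o(\mu)$. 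Cases (iii)--(v) contribute only $\mu^{O(\mu)}$ for fixed $d$, since there all weights and exponents are $O(d\mu)$. So a proof executed along your lines would fail at precisely the verification you yourself flag as the ``main obstacle''; the repair is to transpose your count: $(1+o(1))\mu$ parameters, each ranging over at most $\mu^{(1+o(1))\mu}$ values.

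Your second regime (``$\mu$ fixed, $d\to\infty$'') is essentially sound: by parts (iii)--(v) all weights and exponents are $O(d\mu)=O(d)$, the number of generators and relations is $O(d)$, and the torsion contributes only $\mu^{O(d)}=e^{O(d)}$, so the total count is $d^{O(d)}$ as claimed. One inaccuracy to correct: the blocks carrying nontrivial data are \emph{not} ``boundedly many''---the number of multi-variable blocks can be as large as $s\le d$, and the number of single-variable blocks (each with an exponent $l_{i1}\ge 2$) can be as large as $\mu+\xi((d+2)\mu)$, which grows like $d/\log d$. What saves the estimate is not a bounded number of blocks but the fact that the \emph{total} number of data items is $O(d)$ while each ranges over a set of size $O(d)$, which is exactly what yields $\log\delta(d,\mu)\le A\,d\log d\,(1+o(1))$ with $A=A(\mu)$.
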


We turn to the classification. 
Our approach 
uses the Cox ring $\mathcal R(X)$
which is defined by 
$$
\mathcal R(X)
\ = \
\bigoplus_{D\in\Cl(X)}\Gamma(X,\mathcal O_X(D)).
$$
Given this ring, the variety $X$ can be realized
as a quotient of an open subset in $\mathrm{Spec}(\mathcal R(X))$
by the action of a diagonalizable group.

According to \cite[Theorem 1.3]{HaSu}
the Cox ring of a normal complete
rational variety with a complexity-one torus action 
is finitely generated.
Furthermore, every such Fano variety is uniquely determined 
by its Cox ring (as a $\Cl(X)$-graded ring).
In case of Picard number one
the toric varieties of this type 
correspond to the fake weighted projective spaces 
as defined in \cite{Ka} and the Cox ring is polynomial.
In the subsequent theorems we list 
non-toric complexity-one Fanos with Picard number one
in the  cases where $\Cl(X)$ has non trivial torsion;
for the non-toric results in case of $\Cl(X)=\ZZ$ 
we refer to \cite{HaHeSu}.
The Cox rings are described in terms of generators and relations and 
we specify  the $\Cl(X)$-grading by giving the degrees of the generators.
Additionally we list the degree of the Fano varieties $d_X:=(-K_X)^d$
and the Gorenstein index $\iota(X)$, i.e. the smallest 
positive integer such that $\iota(X)\cdot K_X$ is Cartier.

\begin{theorem}\label{Theo:surfaces<=6}
Let $X$ be a non-toric Fano surface
with an effective $\KK^*$-action, Picard
number one, non trivial torsion in the class group
and $[\Cl(X):\Pic(X)]\leq 6$.
Then its Cox ring is precisely one of the following.

\begin{center}
\begin{longtable}[htbp]{lllccc}
\multicolumn{6}{c}{\bf $[\Cl(X):\Pic(X)] = 2$}
\\[1ex]
\toprule
No.
&
$\mathcal{R}(X)$
&
$\Cl(X)$
&
\emph{grading} 
&
$d_X$
& 
$\iota(X)$
\\
\midrule
1
&
$\KK[{T_1,\ldots,T_4}]/ \bangle{T_1T_2^3+T_3^4+T_4^2}$
&
$\ZZ\oplus \ZZ/2\ZZ$
&
$\left(
\begin{smallmatrix}
1 & 1 & 1 & 2\\
\overline 0 & \overline 0 & \overline 1 & \overline 1
\end{smallmatrix}
\right)$
&
$1$
&
$1$
\\
\bottomrule
\\[2ex]
\\
\\
\\
\multicolumn{6}{c}{\bf $[\Cl(X):\Pic(X)] = 3$}
\\[1ex]
\toprule
No.
&
$\mathcal{R}(X)$
&
$\Cl(X)$
&
\emph{grading} 
&
$d_X$
&
$\iota(X)$
\\
\midrule
2
&
$\KK[{T_1,\ldots,T_4}]/ \bangle{T_1T_2^2+T_3^3+T_4^3}$
&
$\ZZ\oplus\ZZ/3\ZZ$
&
$
\left(
\begin{smallmatrix}
1 & 1 & 1 & 1\\
\overline 1 & \overline 1 & \overline 2 & \overline 0
\end{smallmatrix}
\right)$
&
$1$
&
$1$
\\
\bottomrule
\\[2ex]
\multicolumn{6}{c}{\bf $[\Cl(X):\Pic(X)] = 4$}
\\[1ex]
\toprule
No.
&
$\mathcal{R}(X)$
&
$\Cl(X)$
&
\emph{grading} 
&
$d_X$
&
$\iota(X)$
\\
\midrule
3
&
$\KK[{T_1,T_2,T_3,S_1}] / \bangle{T_1^2+T_2^2+T_3^2}$
&
$\ZZ\oplus\ZZ/2\ZZ\oplus\ZZ/2\ZZ$
&
$
\left(
\begin{smallmatrix}
1 & 1 & 1 & 1\\
\overline 1 & \overline 1 & \overline 0 & \overline 0\\
\overline 0 & \overline 1 & \overline 1 & \overline 0
\end{smallmatrix}
\right)$
&
$2$
&
$1$
\\
\midrule
4
&
$\KK[{T_1,\ldots,T_4}] / \bangle{T_1T_2+T_3^2+T_4^2}$
&
$\ZZ\oplus\ZZ/4\ZZ$
&
$
\left(
\begin{smallmatrix}
1 & 1 & 1 & 1\\
\overline 1 & \overline 3 & \overline 2 & \overline 0
\end{smallmatrix}
\right)$
&
$2$
&
$1$
\\
\midrule
5
&
$\KK[{T_1,\ldots,T_4}] / \bangle{T_1^2T_2+T_3^2+T_4^4}$
&
$\ZZ\oplus\ZZ/2\ZZ$
&
$
\left(
\begin{smallmatrix}
1 & 2 & 2 & 1\\
\overline 1 & \overline 0 & \overline 1 & \overline 0
\end{smallmatrix}
\right)$
&
$2$
&
$1$
\\
\midrule
6
&
$\KK[{T_1,\ldots,T_4}] / \bangle{T_1T_2^2+T_3^6+T_4^2}$
&
$\ZZ\oplus\ZZ/2\ZZ$
&
$
\left(
\begin{smallmatrix}
2 & 2 & 1 & 3\\
\overline 0 & \overline 1 & \overline 0 & \overline 1
\end{smallmatrix}
\right)$
&
$1$
&
$2$
\\
\midrule
7
&
$\KK[{T_1,\ldots,T_5}] / 
\bangle{
\begin{smallmatrix}
T_1T_2+T_3^2+T_4^2,\\
\lambda T_3^2+T_4^2+T_5^2
\end{smallmatrix}}$
&
$\ZZ\oplus\ZZ/2\ZZ\oplus\ZZ/2\ZZ$
&
$
\left(
\begin{smallmatrix}
1 & 1 & 1 & 1 & 1\\
\overline 1 & \overline 1 & \overline 0 & \overline 1 &\overline 0\\
\overline 0 & \overline 0 & \overline 1 & \overline 1 &\overline 0
\end{smallmatrix}
\right)$
&
$1$
&
$1$
\\
\bottomrule
\\[2ex]
\multicolumn{6}{c}{\bf $[\Cl(X):\Pic(X)] = 6$}
\\[1ex]
\toprule
No.
&
$\mathcal{R}(X)$
&
$\Cl(X)$
&
\emph{grading} 
&
$d_X$
&
$\iota(X)$
\\
\midrule
8
&
$\KK[{T_1,T_2,T_3,S_1}] / \bangle{T_1^3+T_2^3+T_3^2}$
&
$\ZZ\oplus\ZZ/3\ZZ$
&
$\left(
\begin{smallmatrix}
2 & 2 & 3 & 1\\
\overline 1 & \overline 2 & \overline 0 & \overline 0
\end{smallmatrix}
\right)
$
&
$2/3$
&
$3$
\\
\midrule
9
&
$\KK[{T_1,\ldots,T_4}]/ \bangle{T_1T_2+T_3^3+T_4^3}$
&
$\ZZ\oplus\ZZ/3\ZZ$
&
$
\left(
\begin{smallmatrix}
1 & 2 & 1 & 1\\
\overline 1 & \overline 2 & \overline 2 & \overline 0
\end{smallmatrix}
\right)$
&
$2$
&
$1$
\\
\midrule
10
&
$\KK[{T_1,\ldots,T_4}] / \bangle{T_1T_2+T_3^2+T_4^4}$
&
$\ZZ\oplus\ZZ/2\ZZ$
&
$
\left(
\begin{smallmatrix}
3 & 1 & 2 & 1\\
\overline 1 & \overline 1 & \overline 1 & \overline 0
\end{smallmatrix}
\right)$
&
$3$
&
$1$
\\
\midrule
11
&
$\KK[{T_1,\ldots,T_4}] / \bangle{T_1T_2^5+T_3^2+T_4^8}$
&
$\ZZ\oplus\ZZ/2\ZZ$
&
$
\left(
\begin{smallmatrix}
3 & 1 & 4 & 1\\
\overline 1 & \overline 1 & \overline 1 & \overline 0
\end{smallmatrix}
\right)$
&
$1/3$
&
$3$
\\
\bottomrule
\end{longtable}
\end{center}
where the parameter $\lambda$ occuring in the second
relation of surface number $7$ can be any element
of $\KK^*\setminus \{1\}$.
Furthermore the Cox rings listed above are 
pairwise non-isomorphic as graded rings.
\end{theorem}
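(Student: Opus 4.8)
The plan is to classify the possible $\Cl(X)$-graded Cox rings directly, using that by \cite[Theorem 1.3]{HaSu} the Cox ring of such an $X$ is a finitely generated complete intersection and that $X$ is recovered from it as a graded ring. Concretely, one first invokes the structure result for complexity-one Cox rings: the generators split into blocks $T_{i1},\dots,T_{in_i}$ for $0\le i\le r$, forming monomials $T_i^{l_i}=\prod_j T_{ij}^{l_{ij}}$, together with free generators $S_1,\dots,S_m$, and the ideal is generated by the $r-1$ trinomials built from three consecutive blocks $i,i+1,i+2$. Non-toricity means that at least one such relation is present, i.e. $r\ge 2$. Since $X$ is a surface, $\Spec(\mathcal R(X))$ has dimension $3$, and Picard number one forces $\rk\Cl(X)=1$; together with the complete-intersection property this yields $\sum_i n_i + m - (r-1) = 3$, which already restricts the admissible block patterns to a short list.

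Next I would impose the Fano condition. With $-K_X$ equal to the sum of the degrees of all generators minus the degrees of the relations, and with $\Pic(X)\otimes\QQ$ one-dimensional, ampleness of $-K_X$ amounts to positivity of its free part. Homogeneity of each trinomial forces its three monomials to share one free degree, so this positivity becomes a platonic-type inequality on the block exponents, which in the single-variable case reads $\sum_i 1/l_i>1$ and is correspondingly relaxed in the presence of the $S_k$. This cuts the exponents down to the classical finite list; that the entire classification is finite for fixed $\mu$ is in any case guaranteed a priori by Theorem \ref{Th:FiniteIndex}.

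Then the torsion and the Picard index enter. For Picard number one the index $\mu=[\Cl(X):\Pic(X)]$ is the index in $\Cl(X)$ of the subgroup of Cartier classes, computed from the degree matrix as an intersection of the local contributions attached to the relevant faces of the Cox construction; this is an explicit expression in the weights, exponents and torsion degrees. Imposing $\mu\le 6$ and running through $\mu=2,3,4,5,6$ reduces every admissible pattern to finitely many graded data. Normalizing the trinomial coefficients by rescaling variables and by the action of $\mathrm{PGL}_2$ on the underlying points of $\mathbb P^1$ (rigid for at most three blocks, but leaving one genuine modulus for four blocks, which accounts for the parameter $\lambda$ in surface number $7$), and discarding data related by an automorphism of the grading group, produces exactly the eleven rings, with $\mu=5$ yielding none. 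This enumeration is the crux: the main obstacle is solving the coupled conditions — platonic inequality, homogeneity, and the Picard-index formula — and, above all, certifying that no two surviving data sets define isomorphic graded rings, i.e. controlling the finite but delicate redundancy coming from admissible coordinate changes.

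Finally, the non-isomorphism assertion follows by invariants. The grading group $\Cl(X)$, the anticanonical degree $d_X=(-K_X)^2$ and the Gorenstein index $\iota(X)$ are all invariants of the graded Cox ring, being intrinsic to $X$. Inspection of the table shows that the triple $(\Cl(X),d_X,\iota(X))$ takes pairwise distinct values over the eleven entries — within each fixed class group the pair $(d_X,\iota(X))$ already separates the remaining cases — so no isomorphism of graded rings can identify two of them.
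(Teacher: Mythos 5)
Your proposal is correct in outline and, for the core classification, follows essentially the paper's own route: both arguments reduce to a finite search through graded data by combining the trinomial structure of complexity-one Cox rings, the Fano positivity condition, the Picard-index formula of Proposition \ref{Prop:FanoPicard}(ii), and the a priori bounds of Theorem \ref{Th:FiniteIndex}, and both leave the resulting finite enumeration to a (computer) check rather than writing it out. The genuine difference is in the final claim that the eleven listed rings are pairwise non-isomorphic as graded rings. The paper proves this equivariantly: minimal generator degrees are invariants, the exponents $l_{ij}>1$ are the orders of the finite isotropy groups of codimension-one $T$-orbits, and since none of the rings is polynomial the $\KK^*$-action is maximal, so conjugacy of maximal tori in $\Aut(X)$ reduces abstract isomorphy to $T$-equivariant isomorphy; the entries are then separated by their exponent and degree data. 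You instead use that the triple $(\Cl(X), d_X, \iota(X))$ is an invariant of the graded ring --- legitimate, because a Fano variety is uniquely determined by its graded Cox ring, so a graded isomorphism between two entries would make the underlying surfaces isomorphic --- and this does close the argument: within each class group occurring in the table, the pairs $(d_X,\iota(X))$ are indeed pairwise distinct. Your route is shorter and avoids isotropy groups and maximal tori entirely, but it depends on first certifying the listed values of $d_X$ and $\iota(X)$ (via Proposition \ref{Prop:FanoPicard}(iii) and the local class groups), and it does not extend to the companion Theorem \ref{thm:3fano2}: there, for instance, threefolds number 2 and 4 share the class group $\ZZ\oplus\ZZ/2\ZZ$, the degree $d_X=8$ and the Gorenstein index $\iota(X)=2$, differing only in the torsion part of the grading, which is precisely the situation the paper's equivariant argument (supplemented by the comparison of torsion classes of prime divisors) is designed to resolve. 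One small citation slip: the trinomial complete-intersection shape of $\mathcal R(X)$ comes from \cite{HaHe}, as recalled in Construction \ref{Con:varX}, not from \cite[Theorem 1.3]{HaSu}, which only provides finite generation.
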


\begin{remark}
Gorenstein surfaces are well known to have ADE-singularities
which are in particular canonical.
Consequently the surfaces of number $1$ to $5$, $7$, $9$ and $10$
are canonical.
Furthermore in \cite{Su} all log-terminal 
Del Pezzo $\KK^*$-surfaces 
of Gorenstein index up to three are classified.
Comparing the surfaces listed in
\cite[Theorems~4.9, 4.10]{Su} with the table above 
shows that number $11$ is  not log-terminal.
The resolution of this surface can 
be explicitly computed by using the 
method of toric ambient modification
as demonstrated in \cite[Examples~3.20, 3.21]{Ha3}.
\end{remark}

\begin{theorem}
\label{thm:3fano2}
Let $X$ be a three-dimensional non-toric Fano 
variety
with an effective two-torus action,
Picard number one, non trivial torsion in the class group  
and $[\Cl(X):\Pic(X)]=2$.
Then its Cox ring is precisely one of the 
following.
\begin{center}
\begin{longtable}[htbp]{lllcccc}
\toprule
No. 
&
$\mathcal{R}(X)$ 
&
$\Cl(X)$
&
\emph{grading} 
& 
$d_X$
&
$\iota(X)$
\\
\midrule 1 &
$\KK[{T_1,\ldots,T_4,S_1}]/
 \langle   T_1T_2+T_3^2+T_4^2  \rangle$
&
$\ZZ\oplus\ZZ/2\ZZ$
&
 $\left(
\begin{smallmatrix}
1 & 1 & 1 & 1 & 1\\
\overline 1 &\overline 1 &\overline 1 &\overline 0 &\overline 0 
\end{smallmatrix}
\right)$
&
$27$
&
$1$
\\
\midrule 2 &
$\KK[{T_1,\ldots,T_4,S_1}]/
 \langle   T_1T_2^3+T_3^2+T_4^4  \rangle$
&
$\ZZ\oplus\ZZ/2\ZZ$
&
 $\left(
\begin{smallmatrix}
1 & 1 & 2 & 1 & 1\\
\overline 0 &\overline 0 &\overline 1 &\overline 0 &\overline 1 
\end{smallmatrix}
\right)$
&
$8$
&
$2$
\\
\midrule 3 &
$\KK[{T_1,\ldots,T_4,S_1}]/
 \langle   T_1T_2^3+T_3^2+T_4^4  \rangle$
&
$\ZZ\oplus\ZZ/2\ZZ$
&
 $\left(
\begin{smallmatrix}
1 & 1 & 2 & 1 & 1\\
\overline 0 &\overline 0 &\overline 1 &\overline 1 &\overline 0 
\end{smallmatrix}
\right)$
&
$8$
&
$1$
\\
\midrule 4 &
$\KK[{T_1,\ldots,T_4,S_1}]/
 \langle   T_1T_2^3+T_3^2+T_4^4  \rangle$
&
$\ZZ\oplus\ZZ/2\ZZ$
&
 $\left(
\begin{smallmatrix}
1 & 1 & 2 & 1 & 1\\
\overline 0 &\overline 0 &\overline 1 &\overline 1 &\overline 1 
\end{smallmatrix}
\right)$
&
$8$
&
$2$
\\
\midrule 5 &
$\KK[{T_1,\ldots,T_4,S_1}]/
 \langle   T_1T_2^5+T_3^2+T_4^6  \rangle$
&
$\ZZ\oplus\ZZ/2\ZZ$
&
 $\left(
\begin{smallmatrix}
1 & 1 & 3 & 1 & 1\\
\overline 0 &\overline 0 &\overline 0 &\overline 1 &\overline 1 
\end{smallmatrix}
\right)$
&
$1$
&
$1$
\\
\midrule 6 &
$\KK[{T_1,\ldots,T_4,S_1}]/
 \langle   T_1T_2^5+T_3^2+T_4^6  \rangle$
&
$\ZZ\oplus\ZZ/2\ZZ$
&
 $\left(
\begin{smallmatrix}
1 & 1 & 3 & 1 & 1\\
\overline 0 &\overline 0 &\overline 1 &\overline 0 &\overline 1 
\end{smallmatrix}
\right)$
&
$1$
&
$1$
\\
\midrule 7 &
$\KK[{T_1,\ldots,T_4,S_1}]/
 \langle   T_1^2T_2^4+T_3^2+T_4^3  \rangle$
&
$\ZZ\oplus\ZZ/2\ZZ$
&
 $\left(
\begin{smallmatrix}
1 & 1 & 3 & 2 & 1\\
\overline 0 &\overline 0 &\overline 1 &\overline 0 &\overline 1
\end{smallmatrix}
\right)$
&
$4$
&
$1$
\\
\midrule 8 &
$\KK[{T_1,\ldots,T_4,S_1}]/
 \langle   T_1^2T_2^4+T_3^2+T_4^3  \rangle$
&
$\ZZ\oplus\ZZ/2\ZZ$
&
 $\left(
\begin{smallmatrix}
1 & 1 & 3 & 2 & 1\\
\overline 0 &\overline 1 &\overline 1 &\overline 0 &\overline 0 
\end{smallmatrix}
\right)$
&
$4$
&
$1$
\\
\midrule 9 &
$\KK[{T_1,\ldots,T_4,S_1}]/
 \langle   T_1T_2^5+T_3^3+T_4^2  \rangle$
&
$\ZZ\oplus\ZZ/2\ZZ$
&
 $\left(
\begin{smallmatrix}
1 & 1 & 2 & 3 & 1\\
\overline 1 &\overline 1 &\overline 0 &\overline 0 &\overline 0 
\end{smallmatrix}
\right)$
&
$4$
&
$2$
\\
\midrule 10 &
$\KK[{T_1,\ldots,T_5}]/
 \langle   T_1T_2^3+T_3^2T_4^2+T_5^2  \rangle$
&
$\ZZ\oplus\ZZ/2\ZZ$
&
 $\left(
\begin{smallmatrix}
1 & 1 & 1 & 1 & 2\\
\overline 0 &\overline 0 &\overline 1 &\overline 1 &\overline 1 
\end{smallmatrix}
\right)$
&
$8$
&
$2$
\\
\midrule 11 &
$\KK[{T_1,\ldots,T_5}]/
 \langle   T_1T_2^3+T_3^2T_4^2+T_5^4  \rangle$
&
$\ZZ\oplus\ZZ/2\ZZ$
&
 $\left(
\begin{smallmatrix}
1 & 1 & 1 & 1 & 1\\
\overline 1 &\overline 1 &\overline 0 &\overline 1 &\overline 0 
\end{smallmatrix}
\right)$
&
$2$
&
$1$
\\
\midrule 12 &
$\KK[{T_1,\ldots,T_5}]/
 \langle   T_1T_2^5+T_3^2T_4^4+T_5^2  \rangle$
&
$\ZZ\oplus\ZZ/2\ZZ$
&
 $\left(
\begin{smallmatrix}
1 & 1 & 1 & 1 & 3\\
\overline 0 &\overline 0 &\overline 1 &\overline 1 &\overline 0 
\end{smallmatrix}
\right)$
&
$1$
&
$1$
\\
\midrule 13 &
$\KK[{T_1,\ldots,T_5}]/
 \langle   T_1T_2^5+T_3^2T_4^4+T_5^2  \rangle$
&
$\ZZ\oplus\ZZ/2\ZZ$
&
 $\left(
\begin{smallmatrix}
1 & 1 & 1 & 1 & 3\\
\overline 0 &\overline 0 &\overline 0 &\overline 1 &\overline 1 
\end{smallmatrix}
\right)$
&
$1$
&
$1$
\\
\midrule 14 &
$\KK[{T_1,\ldots,T_5}]/
 \langle   T_1^2T_2^4+T_3^3T_4^3+T_5^2  \rangle$
&
$\ZZ\oplus\ZZ/2\ZZ$
&
 $\left(
\begin{smallmatrix}
1 & 1 & 1 & 1 & 3\\
\overline 0 &\overline 0 &\overline 1 &\overline 1 &\overline 1 
\end{smallmatrix}
\right)$
&
$1$
&
$2$
\\
\midrule 15 &
$\KK[{T_1,\ldots,T_5}]/
 \langle   T_1^2T_2^4+T_3^3T_4^3+T_5^2  \rangle$
&
$\ZZ\oplus\ZZ/2\ZZ$
&
 $\left(
\begin{smallmatrix}
1 & 1 & 1 & 1 & 3\\
\overline 0 &\overline 1 &\overline 0 &\overline 0 &\overline 1 
\end{smallmatrix}
\right)$
&
$1$
&
$2$
\\
\midrule 16 &
$\KK[{T_1,\ldots,T_5}]/
 \langle   T_1^2T_2^4+T_3^5T_4+T_5^2  \rangle$
&
$\ZZ\oplus\ZZ/2\ZZ$
&
 $\left(
\begin{smallmatrix}
1 & 1 & 1 & 1 & 3\\
\overline 0 &\overline 0 &\overline 1 &\overline 1 &\overline 1 
\end{smallmatrix}
\right)$
&
$1$
&
$2$
\\
\midrule 17 &
$\KK[{T_1,\ldots,T_5}]/
 \langle   T_1^2T_2^4+T_3^5T_4+T_5^2  \rangle$
&
$\ZZ\oplus\ZZ/2\ZZ$
&
 $\left(
\begin{smallmatrix}
1 & 1 & 1 & 1 & 3\\
\overline 0 &\overline 1 &\overline 0 &\overline 0 &\overline 1
\end{smallmatrix}
\right)$
&
$1$
&
$2$
\\
\midrule 18 &
$\KK[{T_1,\ldots,T_5}]/
 \langle   T_1T_2+T_3T_4+T_5^2  \rangle$
&
$\ZZ\oplus\ZZ/2\ZZ$
&
 $\left(
\begin{smallmatrix}
1 & 1 & 1 & 1 & 1\\
\overline 0 &\overline 0 &\overline 1 &\overline 1 &\overline 1 
\end{smallmatrix}
\right)$
&
$27$
&
$2$
\\
\midrule 19 &
$\KK[{T_1,\ldots,T_5}]/
 \langle   T_1T_2+T_3T_4+T_5^2  \rangle$
&
$\ZZ\oplus\ZZ/2\ZZ$
&
 $\left(
\begin{smallmatrix}
1 & 1 & 1 & 1 & 1\\
\overline 0 &\overline 0 &\overline 1 &\overline 1 &\overline 0 
\end{smallmatrix}
\right)$
&
$27$
&
$1$
\\
\midrule 20 &
$\KK[{T_1,\ldots,T_5}]/
 \langle   T_1T_2^2+T_3T_4^2+T_5^3  \rangle$
&
$\ZZ\oplus\ZZ/2\ZZ$
&
 $\left(
\begin{smallmatrix}
1 & 1 & 1 & 1 & 1\\
\overline 0 &\overline 1 &\overline 0 &\overline 1 &\overline 0
\end{smallmatrix}
\right)$
&
$12$
&
$1$
\\
\midrule 21 &
$\KK[{T_1,\ldots,T_5}]/
 \langle   T_1T_2^3+T_3T_4^3+T_5^4  \rangle$
&
$\ZZ\oplus\ZZ/2\ZZ$
&
 $\left(
\begin{smallmatrix}
1 & 1 & 1 & 1 & 1\\
\overline 1 &\overline 1 &\overline 0 &\overline 0 &\overline 0 
\end{smallmatrix}
\right)$
&
$2$
&
$2$
\\
\midrule 22 &
$\KK[{T_1,\ldots,T_5}]/
 \langle   T_1T_2^3+T_3T_4^3+T_5^4  \rangle$
&
$\ZZ\oplus\ZZ/2\ZZ$
&
 $\left(
\begin{smallmatrix}
1 & 1 & 1 & 1 & 1\\
\overline 0 &\overline 0 &\overline 1 &\overline 1 &\overline 0 
\end{smallmatrix}
\right)$
&
$2$
&
$1$
\\
\midrule 23 &
$\KK[{T_1,\ldots,T_5}]/
 \langle   T_1T_2^3+T_3T_4^3+T_5^2  \rangle$
&
$\ZZ\oplus\ZZ/2\ZZ$
&
 $\left(
\begin{smallmatrix}
1 & 1 & 1 & 1 & 2\\
\overline 0 &\overline 0 &\overline 1 &\overline 1 &\overline 0
\end{smallmatrix}
\right)$
&
$8$
&
$2$
\\
\midrule 24 &
$\KK[{T_1,\ldots,T_5}]/
 \langle   T_1T_2^3+T_3T_4^3+T_5^2  \rangle$
&
$\ZZ\oplus\ZZ/2\ZZ$
&
 $\left(
\begin{smallmatrix}
1 & 1 & 1 & 1 & 2\\
\overline 0 &\overline 0 &\overline 1 &\overline 1 &\overline 1
\end{smallmatrix}
\right)$
&
$8$
&
$2$
\\
\midrule 25 &
$\KK[{T_1,\ldots,T_5}]/
 \langle   T_1T_2^5+T_3T_4^5+T_5^2  \rangle$
&
$\ZZ\oplus\ZZ/2\ZZ$
&
 $\left(
\begin{smallmatrix}
1 & 1 & 1 & 1 & 3\\
\overline 0 &\overline 0 &\overline 1 &\overline 1 &\overline 0
\end{smallmatrix}
\right)$
&
$1$
&
$1$
\\
\midrule 26 &
$\KK[{T_1,\ldots,T_5}]/
 \langle   T_1T_2^5+T_3T_4^5+T_5^2  \rangle$
&
$\ZZ\oplus\ZZ/2\ZZ$
&
 $\left(
\begin{smallmatrix}
1 & 1 & 1 & 1 & 3\\
\overline 0 &\overline 0 &\overline 1 &\overline 1 &\overline 1
\end{smallmatrix}
\right)$
&
$1$
&
$2$
\\
\midrule 27 &
$\KK[{T_1,\ldots,T_5}]/
 \langle   T_1T_2^5+T_3^3T_4^3+T_5^2  \rangle$
&
$\ZZ\oplus\ZZ/2\ZZ$
&
 $\left(
\begin{smallmatrix}
1 & 1 & 1 & 1 & 3\\
\overline 0 &\overline 0 &\overline 1 &\overline 1 &\overline 0
\end{smallmatrix}
\right)$
&
$1$
&
$1$
\\
\midrule 28 &
$\KK[{T_1,\ldots,T_5}]/
 \langle   T_1T_2^5+T_3^3T_4^3+T_5^2  \rangle$
&
$\ZZ\oplus\ZZ/2\ZZ$
&
 $\left(
\begin{smallmatrix}
1 & 1 & 1 & 1 & 3\\
\overline 0 &\overline 0 &\overline 1 &\overline 1 &\overline 1
\end{smallmatrix}
\right)$
&
$1$
&
$2$
\\
\midrule 29 &
$\KK[{T_1,\ldots,T_5}]/
 \langle   T_1T_2T_3^2+T_4^2+T_5^4  \rangle$
&
$\ZZ\oplus\ZZ/2\ZZ$
&
 $\left(
\begin{smallmatrix}
1 & 1 & 1 & 2 & 1\\
\overline 0 &\overline 0 &\overline 1 &\overline 1 &\overline 0 
\end{smallmatrix}
\right)$
&
$8$
&
$2$
\\
\midrule 30 &
$\KK[{T_1,\ldots,T_5}]/
 \langle   T_1T_2T_3^2+T_4^2+T_5^4  \rangle$
&
$\ZZ\oplus\ZZ/2\ZZ$
&
 $\left(
\begin{smallmatrix}
1 & 1 & 1 & 2 & 1\\
\overline 0 &\overline 0 &\overline 0 &\overline 1 &\overline 1 
\end{smallmatrix}
\right)$
&
$8$
&
$1$
\\
\midrule 31 &
$\KK[{T_1,\ldots,T_5}]/
 \langle   T_1T_2T_3^2+T_4^2+T_5^4  \rangle$
&
$\ZZ\oplus\ZZ/2\ZZ$
&
 $\left(
\begin{smallmatrix}
1 & 1 & 1 & 2 & 1\\
\overline 0 &\overline 0 &\overline 1 &\overline 1 &\overline 1 
\end{smallmatrix}
\right)$
&
$8$
&
$1$
\\
\midrule 32 &
$\KK[{T_1,\ldots,T_5}]/
 \langle   T_1T_2T_3^4+T_4^2+T_5^6  \rangle$
&
$\ZZ\oplus\ZZ/2\ZZ$
&
 $\left(
\begin{smallmatrix}
1 & 1 & 1 & 3 & 1\\
\overline 0 &\overline 0 &\overline 1 &\overline 1 &\overline 0 
\end{smallmatrix}
\right)$
&
$1$
&
$1$
\\
\midrule 33 &
$\KK[{T_1,\ldots,T_5}]/
 \langle   T_1T_2T_3^4+T_4^2+T_5^6  \rangle$
&
$\ZZ\oplus\ZZ/2\ZZ$
&
 $\left(
\begin{smallmatrix}
1 & 1 & 1 & 3 & 1\\
\overline 0 &\overline 0 &\overline 1 &\overline 0 &\overline 1 
\end{smallmatrix}
\right)$
&
$1$
&
$1$
\\
\midrule 34 &
$\KK[{T_1,\ldots,T_5}]/
 \langle   T_1T_2^2T_3^3+T_4^2+T_5^6  \rangle$
&
$\ZZ\oplus\ZZ/2\ZZ$
&
 $\left(
\begin{smallmatrix}
1 & 1 & 1 & 3 & 1\\
\overline 0 &\overline 1 &\overline 0 &\overline 1 &\overline 0 
\end{smallmatrix}
\right)$
&
$1$
&
$1$
\\
\midrule 35 &
$\KK[{T_1,\ldots,T_5}]/
 \langle   T_1T_2^2T_3^3+T_4^2+T_5^6  \rangle$
&
$\ZZ\oplus\ZZ/2\ZZ$
&
 $\left(
\begin{smallmatrix}
1 & 1 & 1 & 3 & 1\\
\overline 0 &\overline 1 &\overline 0 &\overline 0 &\overline 1 
\end{smallmatrix}
\right)$
&
$1$
&
$1$
\\
\midrule 36 &
$\KK[{T_1,\ldots,T_5}]/
 \langle   T_1T_2^2T_3^3+T_4^3+T_5^2  \rangle$
&
$\ZZ\oplus\ZZ/2\ZZ$
&
 $\left(
\begin{smallmatrix}
1 & 1 & 1 & 2 & 3\\
\overline 1 &\overline 0 &\overline 1 &\overline 0 &\overline 0 
\end{smallmatrix}
\right)$
&
$4$
&
$1$
\\
\midrule 37 &
$\KK[{T_1,\ldots,T_5}]/
 \langle   T_1T_2T_3^4+T_4^3+T_5^2  \rangle$
&
$\ZZ\oplus\ZZ/2\ZZ$
&
 $\left(
\begin{smallmatrix}
1 & 1 & 1 & 2 & 3\\
\overline 0 &\overline 0 &\overline 1 &\overline 0 &\overline 1 
\end{smallmatrix}
\right)$
&
$4$
&
$2$
\\
\midrule
38 
&
$\KK[{T_1,\ldots,T_6}] / 
\bangle{
\begin{smallmatrix}
T_1T_2+T_3T_4+T_5^2,\\
\lambda T_3T_4+T_5^2+T_6^2
\end{smallmatrix}}$
&
$\ZZ\oplus\ZZ/2\ZZ$
&
$
\left(
\begin{smallmatrix}
1 & 1 & 1 & 1 & 1 & 1\\
\overline 0 & \overline 0 & \overline 1 & \overline 1 &\overline 1 & \overline 0\\
\end{smallmatrix}
\right)$
&
$16$
&
$2$
\\
\bottomrule
\end{longtable}
\end{center}
where the parameter $\lambda$ occuring in the second
relation of threefold number $38$ can be any element
of $\KK^*\setminus \{1\}$.
Furthermore the Cox rings listed above are 
pairwise non-isomorphic as graded rings.
\end{theorem}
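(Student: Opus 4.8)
The plan is to reduce the classification to a finite, explicit search by combining the structure theory of Cox rings of rational complexity-one $T$-varieties with the effective bounds of Theorem~\ref{Th:FiniteIndex}, and then to settle the list by a careful analysis of graded isomorphisms. By \cite{HaSu} and the framework of \cite{HaHeSu}, any $X$ as in the hypotheses has a Cox ring of the trinomial shape $\KK[T_{ij},S_k]/I$, where the $T_{ij}$ with $0\le i\le r$ group into monomial blocks $T_i^{l_i}:=\prod_j T_{ij}^{l_{ij}}$ attached to the marked points $a_0,\dots,a_r$ of the base $\mathbb{P}^1$ of the $T$-action, the $S_k$ account for the horizontal directions, and $I$ is generated by the $r-1$ trinomials coming from the configuration of the $a_i$. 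Since $\dim X=3$ and $\Cl(X)=\ZZ\oplus\Gamma$ has rank one, the Cox ring has Krull dimension $4$, which forces $\sum_i n_i+m-(r-1)=4$; imposing that each trinomial be $\Cl(X)$-homogeneous then rigidly couples the exponents $l_{ij}$ to the free and torsion parts of the generator degrees.

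Next I would translate the remaining geometric conditions into arithmetic constraints and feed them into the bounds. The Fano property amounts to the free part of $-K_X=\sum\deg T_{ij}+\sum\deg S_k-\sum_i\deg g_i$ being strictly positive, which is exactly ampleness for Picard number one. The condition $[\Cl(X):\Pic(X)]=2$ is read off from the local class groups at the $T$-fixed points via the usual description of $\Pic(X)$ as an intersection of sublattices of $\Cl(X)$; requiring the index to be precisely $2$ both bounds $|\Gamma|$ (forcing $\Gamma=\ZZ/2\ZZ$ here) and restricts the admissible torsion degrees of the generators. Substituting $d=3$ and $\mu=2$ into Theorem~\ref{Th:FiniteIndex} caps the number of variables and the sizes of the exponents, leaving only $r+1\in\{3,4\}$ and a finite list of candidate tuples $(n_i,l_{ij},m,\text{grading})$; for $r+1=4$ the cross-ratio of the four points cannot be normalized by $\mathrm{PGL}_2$, which is precisely the origin of the free parameter $\lambda$ in number~$38$. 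For each surviving candidate I would verify genuine ampleness, note that the honest trinomial relation guarantees non-toricity, and compute $d_X=(-K_X)^3$ and the Gorenstein index $\iota(X)$ from the combinatorial data.

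The main obstacle is the final step: showing that the surviving candidates are pairwise non-isomorphic as graded rings, with none lost to redundancy. The coarse invariants $d_X$, $\iota(X)$ and the exponent multiset separate most pairs at once, but several entries share the very same ring and the same numerical invariants and differ only in the torsion part of the grading — for instance numbers~$2$ and $4$, both built on $T_1T_2^3+T_3^2+T_4^4$ with $d_X=8$ and $\iota=2$. To handle these I would determine the full group of graded automorphisms of the ring in question — rescalings of the $T_{ij}$ and $S_k$, permutations of monomial blocks and of variables with equal exponent, the residual $\mathrm{PGL}_2$-action on the $a_i$, and the automorphisms of $\Gamma$ — and check directly that no such symmetry carries one grading to another. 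Proving that this finite list of symmetries neither identifies two distinct table entries nor discards a genuine case is the delicate part of the argument.
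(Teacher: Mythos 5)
Your overall skeleton matches the paper's: specialize Theorem~\ref{Th:FiniteIndex} to $(d,\mu)=(3,2)$, note that $|\Cl(X)^t|$ divides $\mu$ so nontrivial torsion forces $\Cl(X)^t\cong\ZZ/2\ZZ$, enumerate the finitely many admissible data $(n_i,l_{ij},w_{ij},u_k)$ (the paper does this by a computer search), test the Fano condition via Proposition~\ref{Prop:FanoPicard}(iv), and compute $d_X$ and $\iota(X)$ from Proposition~\ref{Prop:FanoPicard} and \cite[Corollary 4.9]{Ha2}. The genuine gap is in the last step, which you yourself flag as ``the delicate part'': proving that entries sharing the same ring, the same $d_X$, $\iota(X)$ and exponents, and differing only in the torsion part of the grading (e.g.\ numbers 2, 3, 4) are non-isomorphic as graded rings. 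Your plan --- list the symmetries (rescalings, permutations of variables with equal exponent, $\mathrm{PGL}_2$, automorphisms of the torsion group) and check that none carries one grading to the other --- proves nothing until you have shown that this list exhausts \emph{all} ring isomorphisms compatible with the two gradings, and that is exactly the hard point. A priori a graded isomorphism may mix the several generators sharing a single degree (in number 2 the variables $T_1,T_2,T_4$ all have degree $(1,\overline 0)$), or fail to respect the variable structure altogether; nothing in your sketch excludes this. The same unproven completeness claim is hidden in your use of the ``exponent multiset'' as an invariant of the graded ring.

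The paper fills this hole with a geometric mechanism absent from your proposal. Since none of the listed rings is a polynomial ring, the varieties are non-toric, hence the complexity-one torus is maximal and corresponds to a maximal torus of $\Aut(X)$; as maximal tori are conjugate, abstractly isomorphic entries would be $T$-equivariantly isomorphic. This turns the exponents $l_{ij}>1$ into honest invariants (orders of the finite isotropy groups of one-codimensional orbits, \cite[Theorem~1.3]{HaSu}) and, crucially, matches generators across any would-be isomorphism according to their isotropy behaviour: for numbers 2 and 3, the generator $T_2$ (isotropy of order $3$) must correspond to $T_2$, and $S_1$ (infinite isotropy) to $S_1$. Since every automorphism of $\ZZ\oplus\ZZ/2\ZZ$ preserves the torsion subgroup, the class $\deg T_2-\deg S_1$ --- the nontrivial torsion element for number 2, but zero for number 3 --- is an invariant separating the two, with no need to determine full automorphism groups. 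Without this argument (or a completed, justified automorphism analysis), your proof does not establish the ``precisely one'' and ``pairwise non-isomorphic'' assertions of the theorem.
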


Let $X$ be a normal complete rational variety 
coming with a complexity-one torus action of $T$.
Consider the $T$-invariant open subset $X_0$
consisting of all points $x\in X$ having finite isotropy group.
According to \cite[Corollary~3]{Sum} there is a geometric quotient 
$q\colon X_0\to X_0/T$ such that $X_0/T$ is 
irreducible and normal but possibly not separated.
The property of the orbit space $X_0/T$ 
being separated is reflected in the Cox ring relations
by the condition that each monomial depends on only one variable,
e.g. surface number $3$
in Theorem \ref{Theo:surfaces<=6};
see \cite[Theorem 1.2]{HaSu}.
Geometrically, this means, 
that every orbit is contained in the closure
of either exactly one maximal orbit 
or of infinitely many maximal orbits.
For such varieties we have the following general finiteness statement:

\begin{theorem}\label{Theo:sepquot}
The number of $d$-dimensional normal complete 
rational varieties of Picard number one
with a complexity-one torus action of $T$ 
and Picard index $\mu$, such that $X_0/T$ is separated, is finite.
\end{theorem}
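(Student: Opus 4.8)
The plan is to reduce the statement to a finiteness assertion about $\Cl(X)$-graded Cox rings and then to control both the discrete and the continuous data. By \cite[Theorem~1.3]{HaSu} the Cox ring $\mathcal{R}(X)$ is finitely generated and of the standard complexity-one shape: a complete intersection $\KK[T_{ij},S_k]/(g_1,\dots,g_{r-1})$ whose relations $g_\ell$ are trinomials determined by exponents $l_i=(l_{i1},\dots,l_{in_i})$, by $r+1$ marked points $a_0,\dots,a_r$ on the rational quotient curve $\overline{X_0/T}=\mathbb{P}^1$, and by the grading; moreover $X$ is recovered uniquely from this graded ring. By \cite[Theorem~1.2]{HaSu} separatedness of $X_0/T$ is equivalent to every monomial of every $g_\ell$ depending on a single variable, i.e. $n_i=1$ for all $i$. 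Since finiteness of the Picard index forces $\mathrm{rk}\,\Cl(X)=\mathrm{rk}\,\Pic(X)=1$, comparing $\dim X=\dim\mathcal{R}(X)-\mathrm{rk}\,\Cl(X)$ with $\dim\mathcal{R}(X)=\#\{\text{generators}\}-\#\{\text{relations}\}$ pins the number $m$ of horizontal variables $S_k$ down to $m=d-1$, so this datum is fixed by the dimension alone.

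Next I would bound the torsion and the exponents in terms of $\mu$. As $X$ is rational and projective, $\Pic(X)$ carries no torsion, so the torsion subgroup of $\Cl(X)$ injects into $\Cl(X)/\Pic(X)$ and hence has order at most $\mu$, leaving only finitely many possibilities for it. The exponents $l_i$ together with the weights $\deg T_i,\deg S_k$ govern the local class groups at the singular points lying over the $a_i$ and at the two elliptic fixed points of the $T$-action; since $\mu=[\Cl(X):\Pic(X)]$ is assembled from precisely these local groups, the explicit Picard-index computation — carried out as in \cite{HaHeSu} for the case $\Cl(X)=\ZZ$ — bounds each $l_i$ and each weight by an expression in $d$ and $\mu$.

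The main obstacle is to rule out continuous families. After rescaling the variables and the relations one can normalise everything except the cross-ratios of the points $a_0,\dots,a_r$, so for $r\ge 3$ a genuine $(r-2)$-parameter modulus appears; this is exactly the phenomenon behind the parameter $\lambda$ in surface number~$7$ of Theorem~\ref{Theo:surfaces<=6} and threefold number~$38$ of Theorem~\ref{thm:3fano2}, both of which are non-separated. The heart of the argument is therefore to show that under separatedness and Picard number one one has $r\le 2$, that is, at most three marked points. I would attack this in the language of divisorial fans: separatedness of $X_0/T$ means that $X$ arises from a single polyhedral divisor over $\mathbb{P}^1$ rather than from a nontrivially glued fan, and I expect that a complete $\QQ$-factorial variety of Picard number one obtained in this way can carry at most three points with nontrivial polyhedral coefficient. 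This $r\le 2$ bound is the step I anticipate to be delicate, since the purely numerical constraints do not deliver it: the rank-one class group only forces $m=d-1$, and this holds for every $r$.

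Granting $r\le 2$, the at most three points can be moved to $0,1,\infty$ by $\mathrm{PGL}_2$, the trinomial coefficients become fixed, and no modulus survives. Combining this with the finitely many admissible torsion groups, the finitely many bounded exponent triples $(l_0,l_1,l_2)$ and weight systems, and the already known toric bound $\delta(d,\mu)\le\mu^{d^2}$, one is left with only finitely many $\Cl(X)$-graded Cox rings; as each of these determines $X$ uniquely, the asserted finiteness follows. The single point I am least sure how to make unconditional is the bound $r\le 2$; should a handful of larger configurations survive, the fallback is to show directly that for fixed $(d,\mu)$ only finitely many cross-ratios are compatible with the integrality of the grading, but I expect the clean statement $r\le 2$ to hold.
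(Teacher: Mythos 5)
Your opening moves coincide with the paper's: reduction to $n_i=1$ for all $i$ via \cite[Theorem~1.2]{HaSu}, the count $m=d-1$, and the bound $|\Cl(X)^t|\le\mu$ on the torsion. But the step you declare to be the heart of the argument --- that separatedness plus Picard number one forces $r\le 2$ --- is both false and unnecessary. It is false: take pairwise coprime exponents $l_0,l_1,l_2,l_3=2,3,5,7$, weights $w_i^0=\lcm(l_0,\ldots,l_3)/l_i$, i.e.\ $(105,70,42,30)$, together with $m=d-1$ free variables of degree $1$ and the trinomial relations of Construction \ref{Con:varX}; any $n+m-1$ of these degrees generate $\ZZ$, so this produces a complete normal rational variety of Picard number one with $n_i=1$ for all $i$ (hence separated orbit space) and $r=3$. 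Nothing in the rank-one condition excludes such configurations, and Proposition \ref{prop:Finite3Var} explicitly allows $r$ up to $\mu+\xi(\mu)-2$. It is unnecessary: the finiteness asserted is, as in Proposition \ref{prop:Finite3Var} on which the paper's proof rests, finiteness up to deformation equivalence; the cross-ratio moduli you worry about for $r\ge3$ vary only the coefficient vectors $a_i$, not the discrete data $(r,\mathfrak n,L,m)$ nor the grading, so they move $X$ inside one deformation family and pose no threat. Your fallback --- that integrality of the grading should pin down finitely many cross-ratios --- cannot work either: over the algebraically closed field the rescalings $T_i\mapsto\lambda_i T_i$ realize arbitrary scalings of the coefficient columns, so the configuration of the $a_i$ is completely decoupled from the grading, exactly as the free parameter $\lambda$ in surface number~$7$ of Theorem \ref{Theo:surfaces<=6} illustrates.

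What your proposal is missing is the actual mechanism bounding $r$, which is numerical after all, contrary to your assessment that ``the purely numerical constraints do not deliver it.'' By Lemma \ref{ggTPicind} the pairwise greatest common divisors $\gcd(l_i,l_j)$ divide $|\Cl(X)^t|$; writing $l_i=\lcm_{j\ne i}(\gcd(l_i,l_j))\cdot l_i'$, homogeneity $l_iw_i^0=l_jw_j^0$ shows that $l_i'$ divides $\gcd(w_j^0;\ j\ne i)$, which in turn divides the order of the local class group at a point $y(i)$ whose $i$-th Cox coordinate vanishes. Feeding this into the Picard index formula of Proposition \ref{Prop:FanoPicard}(ii) yields $l_i\le\mu$ and $u_k^0\le\mu$, then $w_i^0\le\mu^r$ by a telescoping gcd argument; and since the $l_i'>1$ are pairwise coprime (at most $\xi(\mu)$ of them) while every index with $l_i'=1$ carries some $\gcd(l_i,l_j)>1$ dividing $\mu$, one obtains $r+1<\mu+\xi(\mu)$. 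This is precisely Proposition \ref{prop:Finite3Var}, and the paper's proof of Theorem \ref{Theo:sepquot} consists of your first step (reduce to $n_i=1$), a citation of these bounds, and the remark that $|\Cl(X)^t|\le\mu$ leaves only finitely many torsion gradings.
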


\section{Description of the Cox ring}

We briefly recall from \cite{HaHe}
a construction of $\QQ$-factorial normal rational
projective varieties with a 
complexity-one torus action.
Here, we specialize to the case of Picard number one;
the details are given in \cite[Proposition~2.4]{HaHe}.

\begin{construction}\label{Con:R(A,n,L,m)}\label{Con:varX}
For $r \ge 1$, consider a sequence
$A = (a_0, \ldots, a_r)$ 
of pairwise linearly independent 
vectors in $\KK^2$, 
a sequence $\mathfrak{n} = (n_0, \ldots, n_r)$ 
of positive integers,
a non-negative integer $m$
and a family  $L = (l_{ij})$
of positive integers,
where $0 \le i \le r$ and 
$1 \le j \le n_i$.
Set 
$$
R(A,\mathfrak n,L,m)
\ := \
\KK[T_{ij},S_k] 
\ / \
\bangle{g_0, \ldots, g_{r-2}}.
$$ 
where the $T_{ij}$ are indexed by
$0 \le i \le r, \; 1 \le j \le n_i$,
the $S_k$ by $1\leq k\leq m$ and 
the relations $g_i$ are defined as follows:
Set $T_i^{l_i}:=T_{i1}^{l_{i1}}\cdots T_{in_i}^{l_{in_i}}$
and
$$
g_i
\ := \
\det
\begin{pmatrix}
a_i & a_{i+1} & a_{i+2}\\
T_{i}^{l_i}  &T_{i+1}^{l_{i+1}}  &T_{i+2}^{l_{i+2}}  
\end{pmatrix}.
$$
Define $n:=n_0+\ldots+n_r$ and 
let $K:=\ZZ\oplus K^t$
be an abelian group with
torsion part $K^t$.
Suppose that $R(A,\mathfrak n, L,m)$ is 
positively $K$-graded
via
$$
\deg\, T_{ij}
\ = \
 w_{ij}\in K,
\qquad
\deg\, S_k
\ = \
u_k\in K,
$$
i.e. $w_{ij},u_k\in\ZZ_{\geq 0}\otimes K^t$,
and that any
$n+m-1$ of these degrees generate $K$ as a group.
The $K$-grading defines a diagonal
action of  $H:=\Spec\; \KK[K]$ on $\KK^{n+m}$.
By construction
$$
\overline X
\ := \
V(g_{i};0\leq i\leq r-2)
\ = \
\Spec\, R(A,\mathfrak n,L,m)
$$
is invariant under this $H$-action.
The open set $\KK^{n+m}\setminus\{0\}$ allows
a geometric quotient of this $H$-action
which is denoted by
$p\colon \KK^{n+m}\setminus\{0\} \to Z$,
where the toric variety $Z$ is a 
a fake weighted projective space.
Furthermore we get a geometric quotient 
$p\colon\widehat X\to X$ of the
embedded open subset
$\widehat X := \overline X\setminus \{0\}$.
\[
\begin{xy}
\xymatrix{
& {\widehat X}\; \ar[d]^{ p} \ar@{^ {(}->}[r] 
&\KK^{n+m}\setminus\{0\}\ar[d]^{p}  \\
&X\; \ar@{^ {(}->} [r] &Z}
\end{xy}
\]
The quotient space $X:=\widehat X\sslash H$ 
is a $\QQ$-factorial normal projective variety
of dimension 
$$
\dim(X)
\ = \
n+m-r.
$$ 
It has divisor class group $\Cl(X)=K$, Cox ring 
$\mathcal R(X) =R(A,\mathfrak n,L,m)$
complexity-one torus action.
This torus is 
given by the stabilizer of $X$ under the action of 
the maximal torus $T_Z$ of $Z$.\\
Note that, if there is an index $0\leq i\leq r$
such that $l_{i1}=1$ and $n_i=1$, then there is
at least one relation containing a linear term.
In this case the ring is isomorphic to the 
polynomial ring that we get, if we omit the relations of this type.
Consequently we may always assume $l_{i1}n_{i}\neq 1$.
\end{construction}

\begin{remark}
Varieties with complexity-one action,
as constructed in \ref{Con:varX}, can be considered
as a generalized version of well-formed complete
intersections in weighted projective spaces,
in the sense of \cite{IaFl}.
\end{remark}

According to \cite[Theorem~1.5]{HaHe}
every  $\QQ$-factorial normal complete rational variety  $X$ 
with a complexity-one torus action and Picard number one
has a  Cox ring $R(X)$
which is isomorphic as a graded ring to 
some $K$-graded algebra
$R(A,\mathfrak n,L,m)$ with $K\cong\Cl(X)$.

We collect some geometric properties
of the varieties  $X$ just constructed.
 Every element 
$w\in K=\ZZ\oplus K^t$ 
can be written as $w=w^0+w^t$ where 
$w^0\in \ZZ$ and $w^t\in K^t$.
Furthermore, every 
$\overline x=(\overline x_{ij},\overline x_k)
\in\widehat X\subseteq \KK^{n+m}$
defines a point $x\in X$ by $x:=p(\overline x)$;
the points $\overline x\in\widehat X$ are called Cox coordinates of $x$.
We denote the set of all weights corresponding to a non-zero 
coordinate of $\overline x$ by
$$
W_{\overline x}
:=\{w_{ij};\;\overline x_{ij}\neq 0\}
\cup
\{u_{k};\;\overline x_{k}\neq 0\}.
$$
Moreover, let $\Cl(X,x)$ denote the local divisor
class group in $x$, i.e. the group of 
all divisor classes that are principal near $x$.

\begin{proposition}\label{Prop:FanoPicard}
Let $X$ be a $\QQ$-factorial complete normal
variety 
with complexity-one torus action
and Picard number one as constructed in \ref{Con:varX}
and set $\gamma_i:=\deg(g_i)$, $0\leq i\leq r$.
Then the following statements hold: 
\begin{itemize}
\item[(i)]
For any 
$\overline{x} \in \widehat{X}$,
the local divisor class group $\Cl(X,x)$ 
of $x := p(\overline{x})$ 
is finite and 
$\gcd(w^0; \; w\in W_{\overline x})$ 
always divides the order of the group.
\item[(ii)]
The Picard group $\Pic(X)$ is free and
the Picard index is given by  
\begin{eqnarray*}
[\Cl(X):\Pic(X)]
& = &
\mathrm{lcm}_{x \in X}(\gcd(w^0; \; w\in W_{\overline x}))\cdot|\Cl(X)^t|.
\end{eqnarray*}
In particular $|\Cl(X)^t|$ is a divisor
of $[\Cl(X):\Pic(X)]$ and we have $|\Cl(X)^t|\leq[\Cl(X):\Pic(X)]$.
\item[(iii)]
For the anticanonical class $-K_X\in\Cl(X)$ and 
its self intersection number $d_X:=(-K_X)^d$ one has
\begin{align*}
-K_X
& = \
\sum_{i=0}^r\sum_{j=1}^{n_i} w_{ij}
+\sum_{k=1}^mu_k-\sum_{i=0}^{r-2}\gamma_i,\\
\quad
d_X
& = \
\left(\sum_{i=0}^r\sum_{j=1}^{n_i} w_{ij}^0
+\sum_{k=1}^mu_k^0-\sum_{i=0}^{r-2}\gamma_i^0\right)^d
\frac{\gamma_0^0\cdots\gamma_{r-2}^0}
{\prod_{i=0}^{r} \prod_{j=1}^{n_i} w_{ij}^0
\prod_{k=1}^mu_k^0\cdot|\Cl(X)^t|}.
\end{align*}
\item[(iv)]
The variety $X$ is Fano 
if and only if the following 
inequality holds:
\begin{eqnarray*}
(r-1)\deg(g_0)^0
 \ = \  
 \sum_{i=0}^{r-2} \deg (g_i)^0
 & < &
 \sum_{i=0}^{r}\sum_{j=1}^{n_i} w_{ij}^0
+\sum_{i=1}^mu_k^0.
\end{eqnarray*}
\end{itemize}
\end{proposition}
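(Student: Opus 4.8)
\emph{The plan} is to reduce all four assertions to the description of local divisor class groups in terms of Cox coordinates, which is available from the quotient presentation $p\colon\widehat X\to X$ of Construction \ref{Con:varX}. The starting point is the identification $\Cl(X,x)\cong K/\langle W_{\overline x}\rangle$ for every $x=p(\overline x)$, where $\langle W_{\overline x}\rangle\subseteq K$ is the subgroup generated by the weights of the non-vanishing Cox coordinates of $\overline x$; this is the standard statement that the classes trivial near $x$ are exactly those of the toric boundary divisors not passing through $x$, and it holds here because $X\subseteq Z$ is embedded into the fake weighted projective space $Z$ compatibly with $p$. Part (i) then follows quickly: since $\overline x\in\widehat X=\overline X\setminus\{0\}$ the set $W_{\overline x}$ is non-empty, and positivity of the grading forces every $w\in W_{\overline x}$ to have $w^0>0$, so $\langle W_{\overline x}\rangle$ has finite index in $K$ and $\Cl(X,x)$ is finite; the divisibility statement comes from the surjection $\Cl(X,x)=K/\langle W_{\overline x}\rangle\twoheadrightarrow\ZZ/g\ZZ$ induced by the projection $K\to\ZZ$, $w\mapsto w^0$, where $g:=\gcd(w^0;\,w\in W_{\overline x})$.

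For part (ii) I would use that a class is Cartier at $x$ precisely when it lies in $\langle W_{\overline x}\rangle$, whence $\Pic(X)=\bigcap_{x\in X}\langle W_{\overline x}\rangle$. The task is thus to compute this intersection, and here the key is to determine which subsets $W_{\overline x}$ actually occur: writing $M_i:=T_i^{l_i}$, the relations $g_0=\cdots=g_{r-2}=0$ are equivalent to $(M_0,\dots,M_r)$ lying in the two-dimensional row space spanned by $A$, so for a point of $\overline X$ only controlled patterns of the $M_i$ can vanish. Exploiting this I would single out the extremal points --- those with smallest $W_{\overline x}$, such as the points with a single non-zero coordinate $S_k$, or a single non-zero $T_{ij}$ in a group with $n_i\ge 2$ --- and show that already the subgroups attached to them cut the intersection down to a rank-one group meeting $K^t$ trivially, giving freeness of $\Pic(X)$. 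The index is then read off by combining the projection to $\ZZ$, whose image is governed by $\lcm_{x}\gcd(w^0;\,w\in W_{\overline x})$, with the torsion contribution $|\Cl(X)^t|$.

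Parts (iii) and (iv) are then essentially formal. For (iii) the anticanonical class is obtained from the adjunction formula for a complete intersection inside the Cox construction, giving $-K_X=\sum w_{ij}+\sum u_k-\sum\gamma_i$; the self-intersection $d_X=(-K_X)^d$ is computed by regarding $X$ as the complete intersection in $Z$ of hypersurfaces of classes $\gamma_0,\dots,\gamma_{r-2}$ and applying the projection formula together with the top intersection number of the generator class on the simplicial toric variety $Z$, which accounts for the denominator $\prod w_{ij}^0\prod u_k^0\cdot|\Cl(X)^t|$ and the numerator $\gamma_0^0\cdots\gamma_{r-2}^0$. For (iv), since the Picard number is one the class $-K_X$ is ample if and only if $(-K_X)^0>0$; observing that homogeneity of each trinomial $g_i$ forces all relation degrees to share the common value $\deg(g_0)^0$, so that $\sum_{i=0}^{r-2}\deg(g_i)^0=(r-1)\deg(g_0)^0$, the inequality $(-K_X)^0>0$ is exactly the stated Fano criterion.

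\emph{The main obstacle} is part (ii): the identification $\Pic(X)=\bigcap_x\langle W_{\overline x}\rangle$ is immediate, but extracting the clean product formula requires carefully controlling the interaction between the free part and the torsion part $K^t$. A naive separation of the two can miscount, since a weight whose free and torsion components are correlated can make a local class group larger than $\gcd(w^0;\,w\in W_{\overline x})$ alone predicts; the crux is therefore to show that, taken over all occurring $W_{\overline x}$, these torsion contributions assemble to exactly one factor of $|\Cl(X)^t|$ while the free direction contributes exactly $\lcm_x\gcd(w^0;\,w\in W_{\overline x})$.
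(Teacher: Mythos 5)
Your part (i) is correct and coincides with what the paper does: both rest on the identification $\Cl(X,x)=K/\langle w;\,w\in W_{\overline x}\rangle$, equivalently $\Pic(X)=\bigcap_{\overline x\in\widehat X}\langle w;\,w\in W_{\overline x}\rangle$, which the paper imports from \cite[Corollary~4.9]{Ha2}; your sketches for (iii) and (iv) also match the paper, which simply cites \cite[Proposition~4.15 and Corollary~4.16]{Ha2} and toric intersection theory in the ambient fake weighted projective space $Z$. Where you deviate is the freeness of $\Pic(X)$: the paper gets it structurally from $\Pic(X)\cong\Pic(Z)$ \cite{ArDeHaLa} together with the existence of toric fixed points in $Z$ \cite{Ew}, whereas your route needs points of $\widehat X$ whose set $W_{\overline x}$ is a singleton. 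When $r\geq 2$ such points exist only if $m\geq 1$ or some $n_i\geq 2$ (a trinomial $g_i$ cannot vanish at a point where exactly one of its monomials is nonzero), and in the remaining case the minimal sets $W_{\overline x}$ have at least two elements, whose span may well contain torsion, so your ``meets $K^t$ trivially'' step is not automatic. That, however, is a minor repair compared with what follows.

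The genuine gap is in (ii), and you name it yourself without closing it: the product formula for the Picard index is never proved, only reduced to ``the crux''. Moreover, this step cannot be completed in the form you propose, because the clean separation of free and torsion contributions is \emph{not} a consequence of $\Pic(X)=\bigcap_x\langle W_{\overline x}\rangle$ plus freeness, and it can actually fail for data satisfying all hypotheses of Construction~\ref{Con:varX}. Concretely, take the toric case ($r\leq 1$, no relations), $K=\ZZ\oplus\ZZ/2\ZZ$, and three variables of degrees $w_1=(1,\overline 1)$, $w_2=(1,\overline 0)$, $w_3=(2,\overline 1)$: the grading is positive and any two of these degrees generate $K$, yet the three points with a single nonzero coordinate give
\begin{align*}
\Pic(X)\ =\ \langle w_1\rangle\cap\langle w_2\rangle\cap\langle w_3\rangle\ =\ \langle (4,\overline 0)\rangle,
\end{align*}
of index $8$ in $K$, whereas $\lcm_x\bigl(\gcd(w^0;\,w\in W_{\overline x})\bigr)\cdot|K^t|=2\cdot 2=4$. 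The culprit is exactly the correlation you worry about: the smallest multiple of $w_3=(2,\overline 1)$ with trivial torsion part is $(4,\overline 0)$, not $(2,\overline 0)$. So your plan would be chasing an intermediate statement that is false in the stated generality; note that the paper's own proof buries the same difficulty in the single line asserting $\bigcap_x\langle w;\,w\in W_{\overline x}\rangle\cong\bigcap_x\langle w^0;\,w\in W_{\overline x}\rangle$ ``because $\Pic(X)$ is free'' --- an abstract isomorphism of infinite cyclic groups, which does not compute the index. Your instinct about where the difficulty sits is exactly right, but locating an obstacle is not overcoming it: to prove (ii) one needs either additional constraints on how free and torsion parts of the weights interact, or a finer analysis of which sets $W_{\overline x}$ occur; as it stands, the central claim of the proposition is unproven in your proposal.
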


\begin{proof}
Let $\overline x(i,j)$ resp. $\overline x(k)$
be a point in $\widehat X$ having the
$ij$-th resp. $(n+k)$-th entry one and all others zero.
With $\widehat Z:=\KK^{n+m}\setminus\{0\}$
we obtain a commutative diagram 
\[
\begin{xy}
\xymatrix{
{\widehat X} 
\
\ar@^{(-}[r]
\ar[r]
\ar[d]_{\sslash H}
&
{\widehat Z}
\ar[d]^{\sslash H}\\
X \
\ar@^{(-}[r]
\ar[r]
& Z
}
\end{xy}
\]
where the induced map embeds $X$
into a toric variety $Z$ 
such that $\Cl(X)\cong\Cl(Z)$
and $\Pic(X)\cong\Pic(Z)$ holds; 
see \cite[Corollary~III.3.1.7]{ArDeHaLa}.
By choice $\overline x(i,j)$ resp. $\overline x(k)$
is a toric fixed point.
Consequently, the Picard group $\Pic(Z)$,
and also $\Pic(X)$, is free
 \cite[Theorem VII. 2.16]{Ew}.
According to \cite[Corollary~4.9]{Ha2}
we obtain
$$
\Pic(X)
\ = \
\bigcap_{\overline x \in \widehat X}
\langle w;\; w\in W_{\overline x}\rangle
\ \cong \
\bigcap_{\overline x \in \widehat X}
\langle w^0;\; w\in W_{\overline x}\rangle,
$$
where the last equality follows from the fact
that $\Pic(X)$ is free.
This proves assertions (i) and (ii).
The remaining statements are special cases
of~\cite[Proposition~4.15 and Corollary~4.16]{Ha2}.
The self intersection number can be easily computed
by using toric intersection theory in the ambient
toric variety; compare \cite[Construction~III 3.3.4]{ArDeHaLa}.
\end{proof}

\begin{corollary}
Let $X$ be a $\QQ$-factorial
complete normal
variety with 
complexity-one torus action
and Picard number one. If $X$ is 
locally factorial,
then the divisor class group $\Cl(X)$
is free.
\end{corollary}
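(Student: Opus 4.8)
The plan is to derive the result directly from the Picard index formula in Proposition~\ref{Prop:FanoPicard}(ii). First I would recall, via the discussion preceding that proposition (which rests on \cite[Theorem~1.5]{HaHe}), that such an $X$ has Cox ring isomorphic as a graded ring to one of the algebras $R(A,\mathfrak n,L,m)$ of Construction~\ref{Con:varX}; in particular Proposition~\ref{Prop:FanoPicard} applies and $\Cl(X)=K=\ZZ\oplus K^t$ with torsion part $\Cl(X)^t=K^t$.

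The main step is to translate local factoriality into the vanishing of the torsion. For a normal variety, local factoriality means that every local ring $\mathcal O_{X,x}$ is a unique factorization domain, equivalently that the local divisor class group $\Cl(X,x)$ is trivial for every $x$, equivalently that every Weil divisor is Cartier, so that $\Pic(X)=\Cl(X)$ and hence $[\Cl(X):\Pic(X)]=1$. Now Proposition~\ref{Prop:FanoPicard}(ii) exhibits $|\Cl(X)^t|$ as a divisor of the Picard index; since the latter equals $1$, we get $|\Cl(X)^t|=1$, i.e.\ $\Cl(X)^t$ is trivial. Because $\Cl(X)=\ZZ\oplus\Cl(X)^t$, this yields $\Cl(X)\cong\ZZ$, which is free.

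The only non-formal input is the classical equivalence, for a Noetherian normal local domain, between factoriality and the vanishing of the class group; once this is granted the argument is immediate. Alternatively one can avoid invoking $\Pic(X)=\Cl(X)$ as a black box and argue through part~(i): local factoriality makes each $\Cl(X,x)$ trivial, so every $\gcd(w^0;\ w\in W_{\overline x})$ divides $1$ and the $\mathrm{lcm}$ appearing in (ii) is $1$, reducing the formula to $[\Cl(X):\Pic(X)]=|\Cl(X)^t|$; combined with $\Pic(X)=\Cl(X)$ this again forces the torsion to be trivial. I do not expect any genuine obstacle here, since the content is entirely carried by Proposition~\ref{Prop:FanoPicard}.
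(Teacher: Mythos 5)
Your argument is correct and is essentially the paper's own (the corollary is stated there without proof, as an immediate consequence of Proposition~\ref{Prop:FanoPicard}): local factoriality gives $\Cl(X,x)=0$ for all $x$, hence $\Cl(X)=\Pic(X)$, and then part~(ii) of the proposition forces $|\Cl(X)^t|=1$ — equivalently, one can simply note that $\Pic(X)$ is free by~(ii) and coincides with $\Cl(X)$. Either way the content is exactly the one the paper intends, so there is nothing to add.
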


The following example shows that one can use
 Proposition \ref{Prop:FanoPicard}(iv)  to create
series of Fano varieties by altering the torsion part 
of the divisor class group $\Cl(X)$:

\begin{example}\label{Ex:difftorsion}
Set $l_{01}=7$, $l_{02}=1$, $l_{11}=5$ and $l_{21}=2$
as well as $w_{01}^0=1$, $w_{02}^0=3$, $w_{11}^0=2$ and 
$w_{21}^0=5$. 
According to Construction \ref{Con:R(A,n,L,m)}
these data define one single Cox ring relation 
of the form
$g_0=T_{01}^7T_{02}+T_{11}^5+T_{21}^2$.
Since we have
$$
w_{01}^0+w_{02}^0+w_{11}^0+w_{21}^0
\ = \
11
\ > \
10
\ = \
\deg(g_0)^0,
$$
one can use these data to create 
Cox rings of Fano varieties.
We provide some possible $\Cl(X)$-gradings,
given by the 
matrices $Q_i$, 
defining Del Pezzo $\KK^*$-surfaces
with fixed grading in the free part
of the divisor class group and varying
torsion part of the class group $\Cl(X)^t$:
\begin{align*}
&Q_1
\ = \
\begin{pmatrix}
1 & 3 & 2 & 5\\
\end{pmatrix},
&&\Cl(X_1)=\ZZ;
\\
&Q_2
\ = \
\begin{pmatrix}
1 & 3 & 2 & 5\\
\overline 0 & \overline 2 & \overline{1}& \overline 1
\end{pmatrix},
&&\Cl(X_2)=\ZZ\oplus\ZZ/3\ZZ;
\\
&Q_3
\ = \
\begin{pmatrix}
1 & 3 & 2 & 5\\
\overline 2 & \overline 1 & \overline 3& \overline 3
\end{pmatrix},
&&\Cl(X_3)=\ZZ\oplus\ZZ/9\ZZ;
\\
&Q_4
\ = \
\begin{pmatrix}
1 & 3 & 2 & 5\\
\overline 0 & \overline 1 & \overline{9}& \overline 6
\end{pmatrix},
&&\Cl(X_4)=\ZZ\oplus\ZZ/11\ZZ;
\\
&Q_5
\ = \
\begin{pmatrix}
1 & 3 & 2 & 5\\
\overline 0 & \overline 3 & \overline{11}& \overline 8
\end{pmatrix},
&&\Cl(X_5)=\ZZ\oplus\ZZ/13\ZZ;
\\
&Q_6
\ = \
\begin{pmatrix}
1 & 3 & 2 & 5\\
\overline 0 & \overline 7 & \overline{15}& \overline{12}
\end{pmatrix},
&&\Cl(X_6)=\ZZ\oplus\ZZ/17\ZZ.
\end{align*}
Note that in this situation not every group of the form
$\ZZ\oplus\ZZ/k\ZZ$, $k\in\NN_{>0}$, can be realized as divisor
class group.
\end{example}

In Example \ref{Ex:difftorsion} the numbers
$\ell_i:=\gcd(l_{i1},\ldots,l_{in_i})$ are pairwise coprime,
namely $\ell_0=1,\;\ell_1=2$ and $\ell_2=5$.
This allows the case $\Cl(X_1)=\ZZ$;
see \cite[Theorem~1.9]{HaHeSu}.
If the numbers $\ell_i$ are not pairwise coprime, 
then there is always non trivial torsion in the
divisor class group
as the following lemma shows.

\begin{lemma}\label{ggTPicind}
Set $\ell_i:=\gcd(l_{i1},\ldots,l_{in_i})$. 
Then all numbers $\gcd(\ell_i,\ell_j)$, where $0\leq i\neq j\leq r,$ 
divide $|\Cl(X)^t|$
and 
the Picard index $\mu$.
In particular this holds for
$\lcm_{j\neq i}(\gcd(\ell_i,\ell_j))$.
\end{lemma}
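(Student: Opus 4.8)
The plan is to reduce both divisibility statements to a single one about the torsion order and then to read it off from the shape of the defining matrix. First observe that the assertion about $\mu$ follows from the one about the class group: by Proposition \ref{Prop:FanoPicard}(ii) the order $|\Cl(X)^t|$ divides the Picard index $\mu$, so once each $\gcd(\ell_i,\ell_j)$ is shown to divide $|\Cl(X)^t|$ it automatically divides $\mu$ as well. The closing ``in particular'' is then immediate, since a least common multiple of divisors of a fixed integer again divides that integer. Hence it suffices to prove, for fixed $i\neq j$ and $c:=\gcd(\ell_i,\ell_j)$, that $c$ divides $|\Cl(X)^t|=|K^t|$.

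Next I would bring in the explicit description of $K=\Cl(X)$ from Construction \ref{Con:varX}. Recall that $\widehat X$ embeds into a fake weighted projective space $Z$ with $\Cl(X)\cong\Cl(Z)=K$, and that $Z$ is the complete simplicial toric variety whose fan has as primitive ray generators the columns of a defining matrix $P=\left(\begin{smallmatrix}L\\ D\end{smallmatrix}\right)$. In the block form of \cite{HaHe} the top block $L$ consists of the $r$ rows $R_s$ ($1\le s\le r$) carrying the entries $-l_{0t}$ in the columns of group $0$, the entries $l_{st}$ in the columns of group $s$, and zeros elsewhere; these are precisely the homogeneity relations $\sum_t l_{st}w_{st}=\sum_t l_{0t}w_{0t}$. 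Since the rows of $P$ form a $\ZZ$-basis of the relation lattice $M=\Ker(Q)$ of the degree map $Q\colon\ZZ^{n+m}\to K$, the standard toric divisor sequence yields $|K^t|=\gcd$ of the maximal $(n+m-1)$-minors of $P$.

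The heart of the argument is then to produce, by an integral row operation on $P$, a single row all of whose entries are divisible by $c$. If one of the indices equals $0$, say $j=0$ and $i\ge 1$, the row $R_i$ already has this property: its nonzero entries are the $-l_{0t}$ and the $l_{it}$, and $c=\gcd(\ell_0,\ell_i)$ divides every $l_{0t}$ and every $l_{it}$. If instead $i,j\ge 1$, then in the combination $R_i-R_j$ the group-$0$ entries cancel, leaving only the entries $l_{it}$ in group $i$ and $-l_{jt}$ in group $j$; as $c$ divides all $l_{it}$ and all $l_{jt}$, this row is again divisible by $c$. Replacing one row of $P$ by this combination is an elementary row operation, hence preserves the lattice spanned by the rows and therefore leaves $|K^t|$ unchanged. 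Expanding any maximal minor of the resulting matrix along the distinguished row, every summand carries a factor divisible by $c$; thus $c$ divides all maximal minors, hence divides their $\gcd=|K^t|$, which is what we wanted.

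The step that needs the most care is the identification $|K^t|=\gcd$(maximal minors of $P$): this relies on the displayed rows forming an honest $\ZZ$-basis of $M$ — equivalently on $P$ being the genuine fan matrix of $Z$ with primitive rays — so that the minor $\gcd$ computes $|\Cl(X)^t|$ exactly and not merely a multiple of it. This is precisely what Construction \ref{Con:varX}, following \cite{HaHe} together with the toric divisor sequence, guarantees; once it is in place, the remaining work is just the elementary row operation and minor expansion described above.
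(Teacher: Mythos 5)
Your proof is correct and takes essentially the same route as the paper's: both rely on the presentation $\Cl(X)\cong\ZZ^{n+m}/\mathrm{im}(P^*)$ from \cite[Theorem~1.5]{HaHe}, identify $|\Cl(X)^t|$ with the product of the elementary divisors of $P$ (equivalently, the gcd of its maximal minors), produce a row divisible by $\gcd(\ell_i,\ell_j)$ — directly when one index is $0$, and by the elementary row operation $R_i-R_j$ otherwise — and finish via $|\Cl(X)^t|\mid\mu$ from Proposition \ref{Prop:FanoPicard}(ii). You merely spell out the minor expansion and the lcm remark in more detail than the paper does.
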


\begin{proof}
According to \cite[Theorem~1.5]{HaHe}
the divisor class group $\Cl(X)$
is isomorphic to $\ZZ^{n+m}/\mathrm{im}(P^*)$
where $P^*$ is dual to $P\colon\ZZ^{n+m}\to\ZZ^{n+m-1}$
given by a  matrix of the form
$$
P
\ = \
\begin{pmatrix}
-l_0 & l_1 & \ldots &  0 & 0\\
\vdots & & \ddots & \vdots &\\
-l_0 & 0 & \ldots & l_r & 0\\
d_0 & d_1 & \ldots & d_r & d'
\end{pmatrix},
$$ 
with $l_i=(l_{i0},\ldots,l_{in_i})$
and some integral block matrices $d_i$ and $d'$.
Consequently $|\Cl(X)^t|$ is the product of
all elementary divisors of $P$
which implies that  $\gcd(\ell_0,\ell_j)$ divides $|\Cl(X)^t|$.
By an elementary row transformation
we obtain
the analogous result for $\gcd(\ell_i,\ell_j)$
where $0\leq i,j\leq r$, $i\neq j$.
Since $|\Cl(X)^t|$ divides the Picard index $\mu$, 
the assertion follows.
\end{proof}

\begin{remark}
One can even prove that  
$\lcm_{0\leq j\leq r}(\prod_{i\neq j}\gcd(\ell_i,\ell_j))$
divides $|\Cl(X)^t|$ (see for example surface number $3$ in 
Theorem \ref{Theo:surfaces<=6}). 
\end{remark}

\section{Effective bounds}

First we consider the case $n_0 = \ldots = n_r=1$,
that means that each relation $g_{i}$
of the Cox ring $\mathcal{R}(X)$ depends
only on three variables.
Then we have $n=r+1$ and 
consequently $m=d-1$.
Furthermore, we may write $T_i$ instead of $T_{i1}$ 
and $w_i$ instead of $w_{i1}$, etc..
In this setting, 
we obtain the following bounds 
for the numbers of possible varieties~$X$
(Fano or not).

\begin{proposition}
\label{prop:Finite3Var}
For any pair $(d,\mu) \in \ZZ^2_{>0}$
there is, up to deformation equivalence, 
only a finite number of 
complete $d$-dimensional
varieties with Picard number one,
Picard index $[\Cl(X):\Pic(X)] = \mu$
and Cox ring of the form
$$ 
\KK[T_0,\ldots, T_r,S_1,\ldots, S_m]
\ / \
\langle{\alpha_{i} T_i^{l_i}
+\alpha_{i+1} T_{i+1}^{l_{i+1}}
+\alpha_{i+2} T_{i+2}^{l_{i+2}}; 
\; 0 \leq i \leq r-2}
\rangle.
$$
In this situation we have $r <\mu+\xi(\mu)-1$
where $\xi(\mu)$ denotes the number of primes smaller than $\mu$.
Moreover for $w_i^0\in\ZZ_{>0}$ and
$u_k^0\in\ZZ_{>0}$ where  $0 \leq i \leq r$, $1 \leq k \leq m$,
and the exponents $l_i$ 
one has
$$
l_i\leq\mu,
\qquad 
w_i^0 \ \leq \ \mu^r,
\qquad
u_k^0 \ \leq \ \mu.
$$
\end{proposition}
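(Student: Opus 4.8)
\noindent\emph{Proof strategy.}
The plan is to reduce the finiteness assertion to the four numerical bounds and to deduce all of them from one principle: for every $\overline{x}\in\widehat{X}$ the order of the local class group $\Cl(X,x)$ divides the Picard index $\mu$. Indeed, the proof of Proposition~\ref{Prop:FanoPicard} gives $\Pic(X)=\bigcap_{\overline{x}\in\widehat{X}}\langle w;\ w\in W_{\overline{x}}\rangle$, so each $\langle W_{\overline{x}}\rangle$ contains $\Pic(X)$ and hence $|\Cl(X,x)|=[\Cl(X):\langle W_{\overline{x}}\rangle]$ divides $[\Cl(X):\Pic(X)]=\mu$. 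Before using this I record the homogeneity of the relations: since $g_i$ and $g_{i+1}$ share the monomials $T_{i+1}^{l_{i+1}}$ and $T_{i+2}^{l_{i+2}}$, comparing degrees along the chain produces a single class $\gamma\in K$ with $l_iw_i=\gamma$ for all $0\le i\le r$; in particular $w_i^0=\gamma^0/l_i$ with $\gamma^0$ independent of $i$.

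\smallskip
First I would bound $u_k^0$ and the exponents. The point $\overline{x}$ with $k$-th $S$-coordinate equal to $1$ and all others $0$ lies in $\widehat{X}$ (the $S_k$ occur in no relation) and has $W_{\overline{x}}=\{u_k\}$; then $u_k^0$ divides $|\Cl(X,x)|$, which divides $\mu$, so $u_k^0\le\mu$. For a fixed $i$ I would use the point $\overline{x}$ with $T_i=0$, all remaining $T_j\neq0$ and all $S_k=0$. Its support $\{0,\dots,r\}\setminus\{i\}$ is admissible, i.e.\ each relation meets it in at least two indices and hence restricts to a binomial or trinomial solvable with the surviving coordinates nonzero, so such a point exists on $\overline{X}$. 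Writing $\Cl(X,x)=\ZZ^{n+m}/(\mathrm{im}(P^*)+\langle e_j;\ j\neq i\rangle)$ for the matrix $P$ of Lemma~\ref{ggTPicind} and killing the directions $e_j$, $j\neq i$, the surviving relations are the vector $(l_i,0,\dots,0)$ coming from the $i$-th relation row, together with the $S$-block $d'$; hence $|\Cl(X,x)|=l_i\cdot|\det(d')|$. As this divides $\mu$ and $|\det(d')|\ge1$, we obtain $l_i\mid\mu$, in particular $l_i\le\mu$.

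\smallskip
The remaining two bounds are then bookkeeping. For $w_i^0$ one uses $w_i^0=\gamma^0/l_i$: the all-$T$ point (support $\{0,\dots,r\}$, all $S_k=0$) shows $\gcd(w_0^0,\dots,w_r^0)=\gamma^0/\lcm_j(l_j)$ divides $\mu$, and in the surface case the generation hypothesis (removing the single weight $u_1$ leaves gcd one) forces $\gamma^0=\lcm_j(l_j)$, whence $w_i^0=\lcm_j(l_j)/l_i$ divides $\lcm_{j\neq i}(l_j)\le\prod_{j\neq i}l_j\le\mu^{r}$, a product of $r$ factors each at most $\mu$; the higher-dimensional case is handled in the same spirit from the minor expression $w_i^0=|\det(P_{\hat\imath})|/|K^t|$. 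For $r$ I would count the $r+1$ exponents $l_0,\dots,l_r\in\{2,\dots,\mu\}$: the pairwise coprime ones consume distinct primes $<\mu$, so there are at most $\xi(\mu)$ of them, while Lemma~\ref{ggTPicind} and the subsequent remark bound the products $\prod_{j\neq i}\gcd(\ell_i,\ell_j)$ by $|K^t|\le\mu$ and thereby control the non-coprime exponents; an elementary count yields $r<\mu+\xi(\mu)-1$. Finiteness now follows: $m=d-1$ and $r$ are bounded, hence so are $n$ and the groups $K=\ZZ\oplus K^t$ with $|K^t|\le\mu$; the data $l_i\le\mu$, $w_i^0\le\mu^r$, $u_k^0\le\mu$ and the torsion parts in the finite group $K^t$ range over finite sets, so only finitely many $K$-graded rings $R(A,\mathfrak n,L,m)$ occur apart from the continuous choice of the pairwise independent $a_i\in\KK^2$, and varying the latter only produces deformations.

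\smallskip
The main obstacle is the bound $l_i\le\mu$. It is the single step requiring a genuinely clever point of $\widehat{X}$ together with an \emph{exact} evaluation of a local class group order through the defining matrix $P$; the two delicate points are the admissibility of the support $\{0,\dots,r\}\setminus\{i\}$ (guaranteeing that the point actually lies on $\overline{X}$) and the identification of the order as $l_i|\det(d')|$. By contrast $u_k^0\le\mu$, the counting behind the bound on $r$, and the final assembly of finiteness are routine once this is in hand.
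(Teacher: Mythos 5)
Your core mechanism is correct but genuinely different from the paper's. The paper bounds the $l_i$ by combining Lemma~\ref{ggTPicind} (pairwise gcd's of exponents divide $|\Cl(X)^t|$) with the homogeneity relation $l_iw_i^0=l_jw_j^0$ and the Picard-index formula of Proposition~\ref{Prop:FanoPicard}(ii); you instead evaluate a local class group order exactly: at the point with $T_i=0$, all other $T_j\neq 0$ and all $S_k=0$ (which does exist --- take Cox coordinates with $T_j^{l_j}=\det(a_j,a_i)$; it is precisely the paper's point $\overline y(i)$), the presentation $\Cl(X)=\ZZ^{n+m}/\mathrm{im}(P^*)$ collapses to a block-triangular matrix and yields $|\Cl(X,x)|=l_i\,|\det(d')|$, valid for $i\neq 0$ and $i=0$ alike, whence $l_i$ divides $\mu$. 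This is a clean derivation, and the divisibility $|\Cl(X,x)|$ divides $\mu$ that it rests on is justified exactly as you say, via $\Pic(X)=\bigcap_{\overline x}\langle W_{\overline x}\rangle$ and the identification $\Cl(X,x)=\Cl(X)/\langle W_{\overline x}\rangle$ underlying the proof of Proposition~\ref{Prop:FanoPicard}. Your treatment of $u_k^0$, your surface case of the weight bound, and the finiteness assembly are also fine.

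There is, however, a genuine gap at the bound $w_i^0\le\mu^r$ in dimension $d\ge 3$. Your explicit argument covers only $m=1$, where removing the single weight $u_1$ from the generating set forces $\gcd_j(w_j^0)=1$; for $m\ge 2$ this fails, and the route you point to instead, $w_i^0=|\det(P_{\hat\imath})|/|K^t|=\prod_{j\neq i}l_j\cdot|\det(d')|/|K^t|$ ``in the same spirit,'' does not deliver $\mu^r$, because the factor $|\det(d')|/|K^t|$ can exceed $1$. Concretely, $\KK[T_0,T_1,T_2,S_1,S_2]/\langle T_0^2+T_1^3+T_2^5\rangle$ with $\Cl(X)=\ZZ$ and weights $(30,20,12,1,1)$ satisfies all hypotheses of the construction, and there $|K^t|=1$ while $l_0|\det(d')|=|\Cl(X,y(0))|=\gcd(20,12)=4$, so $|\det(d')|=2$; note also $\gcd_j(w_j^0)=2\neq 1$, so the surface argument really does break. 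As written, the general case is therefore unproven. The repair is cheap and uses only tools you already have: from $\gcd_j(w_j^0)=\gamma^0/\lcm_j(l_j)$ one gets $\gamma^0=\lcm_j(l_j)\cdot\gcd_j(w_j^0)$; your result $l_j\mid\mu$ for all $j$ gives $\lcm_j(l_j)\mid\mu$; your all-$T$ point gives $\gcd_j(w_j^0)\le\mu$; hence $w_i^0\le\gamma^0\le\mu^2\le\mu^r$, since $r\ge 2$ whenever a relation is present. (The paper instead writes $w_0^0$ as a telescoping product of $r-1$ gcd-ratios, each dividing some $l_i\le\mu$, times $\gcd(w_0^0,\ldots,w_{r-1}^0)\le\mu$, which handles all $d$ uniformly.) A secondary weakness: your bound $r<\mu+\xi(\mu)-1$ is asserted (``an elementary count yields'') rather than carried out; in fairness the paper's corresponding step is equally terse, and your instinct that the Remark following Lemma~\ref{ggTPicind} is what excludes configurations such as $l_0=\cdots=l_r=2$ is the right one, but a complete proof would have to perform that count explicitly.
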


\begin{proof}
Consider the total coordinate space $\overline X\subseteq\KK^{r+1+m}$ and 
the quotient $p\colon\widehat X\to X$
as well as the points $\overline x(k)\in\widehat X$ having
the $(r+k)$-th coordinate one and all others zero. 
Set $x(k):=p(\overline x(k))$. 
Then $u_k^0$ divides the order 
of the local class group  $\Cl(X,x(k))$.
In particular we have $u_k^0\leq\mu$.\\
For each $0\leq i\leq r$ fix a point 
$\overline y(i)=(\overline y_0,\ldots,\overline y_r,0,\ldots,0)$
in $\widehat X$ such that
$\overline y_i=0$ and $\overline y_j\neq 0$
for $i\neq j$, and set $y_i:=p(\overline y(i))$. 
Then we obtain
$$
\gcd(w_j^0,j\neq i)
\; \mid \;
|\Cl(X,y(i))|.
$$
By Lemma \ref{ggTPicind} 
we have
$\lcm_{j\neq i}(\gcd(l_i,l_j))\mid |\Cl(X)^t|$.
Now consider $l_i'$ such that 
$l_i=\lcm_{j \neq i}(\gcd(l_i,l_j))\cdot l_i'$.
Then the homogeneity condition $l_iw_i^0=l_jw_j^0$ 
gives $l_i'\mid w_j^0$ for all $j\neq i$ and
consequently $l_i'\mid\gcd(w_j^0,j\neq i)$. 
Since $l_i= l_i'\cdot\lcm_{j\neq i}(\gcd(l_i,l_j))$
we can conclude $l_i\leq\mu$
by using the formula
$$
[\Cl(X):\Pic(X)]
\ = \
\lcm_{x\in X}(\gcd(w^0;\; w\in W_{\overline X}))\cdot |\Cl(X)^t|
$$
of Proposition \ref{Prop:FanoPicard}(ii).
Since the $l_i'$ are pairwise coprime
we obtain
$l_0'\cdots l_r'\mid\gamma^0$
and $l_0'\cdots l_r'\mid\mu$ 
where $\gamma^0:=\deg(g_0)^0=l_iw_i^0$.
From $l_iw_i^0=l_jw_j^0$ we deduce that
$$
l_i=l_0\frac{w_0^0}{w_i^0}=
l_0\frac{w_0^0\cdots w_{i-1}^0}{w_1^0\cdots w_i^0}
=\eta_i\cdot\frac{\gcd(w_0^0,\ldots,w_{i-1}^0)}{\gcd(w_0^0,\ldots, w_i^0)}
\leq \mu,
$$
where $1\leq \eta_i\leq \mu$. 
In particular the last fraction is smaller than $\mu$.
All in all this gives us
\begin{align*}
w_0^0
&=
\frac{w_0^0}{\gcd(w_0^0,w_1^0)}\cdot
\frac{\gcd(w_0^0,w_1^0)}{\gcd(w_0^0,w_1^0,w_2^0)}\cdot
\ldots\cdot
\frac{\gcd(w_0^0,\ldots,w_{r-2}^0)}{\gcd(w_0^0,\ldots,w_{r-1}^0)}
\cdot
\gcd(w_0^0,\ldots,w_{r-1}^0)\\
&\leq \mu^{r-1}\cdot\mu=\mu^r.
\end{align*}
Analogously we get the boundedness for all $w_i^0$.
Now let $q$ be the number of $l_i'$ that are greater than one.
Since all $l_i'$, $0\leq i\leq r$, are coprime
$q$ is bounded by $\xi(\mu)$ the number of
primes smaller than $\mu$.
To avoid the toric case we assume $l_i\neq 1$ for all $0\leq i\leq r$.
Consequently if $l_i'=1$, then there is at least one $0\leq j\leq r$
such that $\gcd(l_i,l_j)>1$. Since $\gcd(l_i,l_j)$ divides $\mu$
we get $r+1-q<\mu$ as a rough bound.
All in all we get
$
r+1
\ = \
r+1-q+q
\ < \
\mu+\xi(\mu).
$
\end{proof}

\begin{proof}[Proof of Theorem \ref{Theo:sepquot}]
Let $X$ be a variety as  in Theorem \ref{Theo:sepquot}.
Then each monomial of the Cox ring relations 
depends on only one variable,
i.e. $n_i=1$ for $0\leq i\leq r$;
for details see \cite[Theorem~1.2]{HaSu}.
Consequently
Proposition \ref{prop:Finite3Var} 
provides bounds for the discrete data
such as the non torsion parts of the weights $w_{ij}^0$ and $u_k^0$,
the exponents $l_{ij}$ and the number of Cox ring relations $r$.
Since $|\Cl(X)^t|\leq\mu$ holds, 
the number of possibilities for 
the torsion part of the grading
is also restricted
which implies the assertion.
\end{proof}

\begin{theorem}
\label{Th:FiniteIndex}
Let $X$ be a  Fano variety with complexity-one torus action 
as introduced in Construction \ref{Con:varX}.
Fix the dimension $d = \dim(X)=m+n+r$
and the Picard index 
$\mu = [\Cl(X):\Pic(X)]$.
Then the number of Cox ring relations $r$, the free part of the degree 
of the relations $\gamma^0$, the weights
$w_{ij}^0,\; u_k^0$ and the exponents $l_{ij}^0$,
where $0 \le i \le r,\;1 \le j \le n_i$ and $1\leq k\leq m$,
are bounded. In particular one obtains the following effective bounds:
We have 
$$
u_k^0 \ \le \ \mu \quad \text{for } 1 \le k \le m
\quad
\text{and}
\quad
|\Cl(X)^t|\leq\mu.
$$
Moreover, the handling of the remaining data 
can be organised in five cases,
where $\xi(x)$ denotes the number
of primes smaller than $x$.
\begin{enumerate}
\item[(i)]
Suppose that $r = 0,1$ holds. 
Then $n + m \le d+1$ holds 
and one has the bounds
$$
w_{ij}^0 \ \le \ \mu 
\quad\text{for } 0 \le i \le r \text{ and } 1 \le j \le n_i,
$$
and the Picard index is given by
$$
\mu 
\ = \ 
\mathrm{lcm}(w_{ij}^0,u_k^0; 
\;0 \le i \le r,  1 \le j \le n_i, 1 \le k \le m )
\cdot|\Cl(X)^t|.
$$
\item[(ii)]
Suppose that $r \ge 2$ and $n_0=1$ hold.
Then $r \le \mu+\xi(\mu)-1$ and $n=r+1$ and 
$m=d-1$ hold and one has 
$$
\qquad\qquad
w_{i1}^0 \ \le \ \mu^r,
\quad
l_{i1} \ \mid \ \mu
\qquad
\text{for } 0 \le i \le r,
\qquad
\gamma^0 \ \le \ \mu^{r+1},
$$
and the Picard index is given by
$$
\mu 
\ = \
\mathrm{lcm}(\gcd_i(w_{j1}^0; \; i\neq j),u_k^0;
\; 0 \le i \le r , 1 \le k \le m )
\cdot|\Cl(X)^t|.
$$
\item[(iii)]
Suppose that $r \ge 2$ and $n_0 > n_1=1$ hold.
Then we may assume $l_{11} \geq \ldots \geq l_{r1} \ge 2$,
we have $r \le\mu+ \xi(6d\mu)-1$ and
$n_0+m = d$ and the bounds
$$
w_{01}^0,\ldots,w_{0n_0}^0 \ \le \ \mu,
\qquad
l_{01},\ldots,l_{0n_0} \ \leq \ 6d\mu, 
\qquad
\gamma^0 \ < 6d\mu,
$$
$$
w_{11}^0\ < \ 2d\mu,
\quad
w_{21}^0\ < \ 3d\mu, 
\qquad
w_{i1}^0,\; 
l_{i1} \ < \  6d\mu 
\quad
\text{for } 1 \le i \le r,
$$
and the Picard index is given by
$$
\mu
\ = \
\mathrm{lcm}(w_{0j}^0, \gcd(w_{11}^0,\ldots,w_{r1}^0), u_k^0; \; 
1 \le j \le n_0, 1 \le k \le m )
\cdot|\Cl(X)^t|.
$$
\item[(iv)]
Suppose that $n_1 > n_2 = 1$ holds.
Then we may assume $l_{21} \geq \ldots \geq l_{r1} \ge 2$,
we have $r \le\mu+ \xi(2(d+1)\mu)-1$ 
and $n_0+n_1+m = d+1$ and the bounds
$$
 w_{ij}^0 \ \le \ \mu
\quad
\text{for } i=0,1 \text{ and }  1 \le j \le n_i,
\qquad
w_{21}^0 \ < \ (d+1)\mu,
$$
$$
\gamma^0,\;w_{ij}^0,l_{ij} \ < \ 2(d+1)\mu
\quad
\text{for } 0 \le i \le r ,\text{ and } 1 \le j \le n_i,
$$
and the Picard index is given by
$$
\mu
\ = \
\mathrm{lcm}(w_{ij}^0, u_k^0; 
\; 0 \le i\le 1,  1 \le j \le n_i, 1 \le k \le m)
\cdot
|\Cl(X)^t|.
$$
\item[(v)]
Suppose that $n_2 > 1$ holds
and let $s$ be the maximal number with $n_{s}>1$.
Then one may assume $l_{s+1,1} \geq \ldots \geq l_{r1} \ge 2$,
we have $s\leq d$,  $r \le\mu+ \xi((d+2)\mu)+d-1$ and 
$n_0+ \ldots + n_s+m = d+s$ and the bounds
$$
w_{ij}^0 \ \le \ \mu, 
\quad \text{for } 0 \le i \le s,
\qquad 
\gamma^0 \ < \ (d+2)\mu,
$$
$$
w_{ij}^0, l_{ij} \ <  \ (d+2)\mu
\quad \text{for } 0 \le i \le r  \text{ and } 1 \le j \le n_i,
$$
and the Picard index is given by
$$
\mu
\ = \
\mathrm{lcm}(w_{ij}^0, u_k^0; 
\; 0 \le i \le s, 1 \le j \le n_i, 1 \le k \le m)
\cdot
|\Cl(X)^t|.
$$
\end{enumerate} 
Note that assertion (i) and (ii)
do not require the Fano condition.
\end{theorem}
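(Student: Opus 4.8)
The plan is to read every bound off three engines: the local-class-group computation of Proposition \ref{Prop:FanoPicard}, the \emph{homogeneity} of the defining relations, and the Fano inequality of Proposition \ref{Prop:FanoPicard}(iv). Homogeneity of each trinomial $g_i$ forces its three monomials to share a common $K$-degree, so the free part $\gamma^0=\sum_{j=1}^{n_i}l_{ij}w_{ij}^0$ is independent of $i$; this identity and the inequality $(r-1)\gamma^0<\sum_{ij}w_{ij}^0+\sum_k u_k^0$ are the two quantitative inputs. The universal bounds come first. For any $k$ the point of $\KK^{n+m}$ whose only nonzero Cox coordinate is $S_k=1$ lies in $\widehat X$, since every monomial of every relation is a product of $T$-variables and hence vanishes there; thus $W_{\overline x}=\{u_k\}$ and Proposition \ref{Prop:FanoPicard}(i)--(ii) give $u_k^0\mid\mu$, so $u_k^0\le\mu$. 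The same point with single nonzero entry $T_{ij}=1$ works whenever $n_i\ge 2$: if $a\ne i$ some variable of block $a$ is zero, and if $a=i$ some other variable of block $i$ is zero, so every $T_a^{l_a}$ still vanishes and $w_{ij}^0\mid\mu$. Hence $w_{ij}^0\le\mu$ for every variable of a \emph{fat} block, while $|\Cl(X)^t|\le\mu$ is immediate from Proposition \ref{Prop:FanoPicard}(ii). When $r\le 1$ there are no relations, so even single thin-block points lie in $\widehat X$ and all weights satisfy $w_{ij}^0\le\mu$: this is case (i), which needs no Fano hypothesis.

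Next I organize the remaining work. After reordering I may assume $n_0\ge\cdots\ge n_r$ and, among the thin blocks, $l_{i1}\ge\cdots\ge l_{r1}\ge 2$, exponents equal to one being removed by the reduction in Construction \ref{Con:varX}. Let $s$ be the largest index with $n_s>1$; the five cases correspond to $r\le 1$ (case (i)), all blocks thin (case (ii), which is exactly Proposition \ref{prop:Finite3Var} and again avoids Fano), and $s=0$, $s=1$, $s\ge 2$ (cases (iii), (iv), (v)). Counting variables gives $n_0+\cdots+n_s+m=d+s$, and since each fat block has at least two variables, $2(s+1)\le d+s$, whence $s\le d-2$ and in particular $s\le d$.

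The core estimate bounds $\gamma^0$. For a thin block $w_{i1}^0=\gamma^0/l_{i1}\le\gamma^0/2$, while for a fat block $\sum_j w_{ij}^0\le\sum_j l_{ij}w_{ij}^0=\gamma^0$ with each summand $\le\mu$. Feeding these into the Fano inequality, bounding the fat-block weights by $(n_0+\cdots+n_s)\mu$, the thin-block weights by $(r-s)\gamma^0/2$, and $\sum_k u_k^0$ by $m\mu$, and using $n_0+\cdots+n_s+m=d+s$, I obtain after rearranging an inequality of the shape $\gamma^0(r+s-2)<2(d+s)\mu$. In case (v), where $s\ge 2$ and $r\ge s$, the worst case $r=s=2$ already yields $\gamma^0<(d+2)\mu$, and then $l_{ij}\le\gamma^0<(d+2)\mu$ together with $w_{i1}^0=\gamma^0/l_{i1}<(d+2)\mu$ for the thin blocks. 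The bound on $r$ follows as in Proposition \ref{prop:Finite3Var}: writing $\ell_i=\gcd_j(l_{ij})$ and splitting off the pairwise-coprime primitive parts $l_i'$, Lemma \ref{ggTPicind} shows $\prod_i l_i'\mid\mu$, so at most $\xi((d+2)\mu)$ thin blocks have $l_i'>1$, while the rest are controlled by $\gcd(\ell_i,\ell_j)\mid\mu$; adding the $s+1\le d-1$ fat blocks gives $r\le\mu+\xi((d+2)\mu)+d-1$. The Picard-index formula is then read off from Proposition \ref{Prop:FanoPicard}(ii) by restricting the lcm to the coordinate points realizing each local gcd.

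Cases (iii) and (iv) run along the same lines but are where I expect the genuine difficulty: with only one or two fat blocks the coefficient $r+s-2$ is too weak to deliver the sharp constants $6d\mu$ and $2(d+1)\mu$ directly. The remedy is to isolate the one or two largest thin exponents and carry out the classical Platonic/ADE-type analysis: dividing the Fano inequality by $\gamma^0$ turns it into a lower bound for $1/l_{11}+1/l_{21}+\cdots$ of the form $r-2$ up to the fat-block contribution, and the minimal admissible values $l=2,3$ — the source of the factors $2,3,6$ — pin down the few exceptional exponents (hence the distinct bounds $w_{11}^0<2d\mu$, $w_{21}^0<3d\mu$) while forcing the remaining ones to stay small. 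The main obstacle is precisely this sharp constant bookkeeping in the low-$s$ regime, where the elementary inequality on reciprocals of exponents must be interlocked with the local-class-group divisibilities so as not to lose factors. Once $\gamma^0$, all exponents, and $r$ are bounded in every case, finiteness of the whole data set is automatic: the free parts of the weights and the exponents range over finite sets, $|\Cl(X)^t|\le\mu$ admits finitely many torsion extensions of the grading, and the only continuous parameters (the vectors $a_i$, respectively $\lambda$) move in families, so only finitely many deformation types arise.
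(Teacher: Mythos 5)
Your global architecture matches the paper's: coordinate points in $\widehat X$ plus Proposition~\ref{Prop:FanoPicard} give $u_k^0\le\mu$, $|\Cl(X)^t|\le\mu$ and $w_{ij}^0\le\mu$ for the ``fat'' blocks; case (ii) is delegated to Proposition~\ref{prop:Finite3Var} exactly as in the paper; and for case (v) your cancellation of the thin-block weights against $(r-1)\gamma^0$ is in substance the paper's cut-ring inequality (\ref{ie2}) with $t=s$, landing on the same bound $\gamma^0<(d+2)\mu$. Note also that your own inequality $\gamma^0(r+s-2)<2(d+s)\mu$ already settles case (iv) completely: with $s=1$ and $r\ge 2$ it gives $\gamma^0<2(d+1)\mu$ and $w_{21}^0\le\gamma^0/2<(d+1)\mu$, so your worry that (iv) needs a finer analysis is unfounded.

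The genuine gap is case (iii) in the regime $r=2$ (one fat block, two thin blocks, a single relation), where your core estimate degenerates to the vacuous $0<2d\mu$; for $r\ge 3$ it gives $\gamma^0<2d\mu$ and all is well, so this is the only missing configuration, but it is not handled. You sketch the right remedy --- dividing the Fano inequality by $\gamma^0$ to get $1<1/l_{11}+1/l_{21}+d\mu/\gamma^0$, with the extremal pair $(l_{11},l_{21})=(3,2)$ producing exactly the constants $2d\mu$, $3d\mu$, $6d\mu$ --- but you explicitly defer the ``bookkeeping'', and in particular this reciprocal inequality yields nothing when $l_{11}=l_{21}=2$, since then $1/l_{11}+1/l_{21}=1$. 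That degenerate case is precisely where the paper's key Lemma~\ref{Lem:1relation}(i) switches from the inequality (used when $l_{11}>l_{21}\ge 2$, i.e.\ $l_{11}\ge 3$) to a divisibility argument: homogeneity forces $w_{11}^0=w_{21}^0$, a point of $\widehat X$ whose only nonzero coordinates are $T_{11},T_{21}$ (solve $\eta_1T_{11}^{l_{11}}+\eta_2T_{21}^{l_{21}}=0$) shows via Proposition~\ref{Prop:FanoPicard}(ii) that $w_{11}^0=\gcd(w_{11}^0,w_{21}^0)$ divides $\mu$, and Lemma~\ref{ggTPicind} gives $l_{11}=l_{21}=\gcd(l_{11},l_{21})\mid\mu$, so all data in that subcase are bounded by $\mu$. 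Without this step your proof of (iii) is incomplete. A secondary slip: you attribute $\prod_i l_i'\mid\mu$ to Lemma~\ref{ggTPicind}, but that lemma only gives $\gcd(\ell_i,\ell_j)\mid\mu$; the product divisibility is proved in Proposition~\ref{prop:Finite3Var} via $l_i'\mid w_j^0$, an argument that uses that \emph{all} blocks are thin and is not available once a fat block is present. This is exactly why the paper counts the thin blocks with $\ell_i'>1$ by distinct primes below the bound on the exponents, i.e.\ by $\xi(6d\mu)$, $\xi(2(d+1)\mu)$, $\xi((d+2)\mu)$ rather than by $\xi(\mu)$; your final counts coincide with the paper's, but the justification you give for them is not correct as stated.
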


The remaining part of this chapter is devoted 
to the proofs of the main statements of this paper.
To prove Theorem \ref{Th:FiniteIndex}
we need the following essential lemma.

\begin{lemma}
\label{Lem:1relation}
Consider the ring
$\KK[T_{ij}; \; 0 \le i \le 2, \; 1 \le j \le n_i][S_1,\ldots,S_k]
 / 
\bangle{g}
$
where $n_0 \ge n_1 \ge n_2 \ge 1$ holds 
and let $K$ be a finitely generated abelian group
of the form $K=\ZZ\oplus K^t$ with torsion part $K^t$.
Suppose that $g$ is homogeneous 
with respect to the $K$-grading 
of $\KK[T_{ij},S_k]$ given by 
$\deg \, T_{ij} = :w_{ij}=w_{ij}^0 + w_{ij}^t \in K$
with $w_{ij}^0 \in\ZZ_{>0}$ 
and $\deg \, S_k = :u_k=u_k^0+u_k^t \in K$ with
$u_k^0 \in\ZZ_{> 0}$,
and assume 
\begin{eqnarray*}
\deg (g)^0
& < & 
\sum_{i=0}^2\sum_{j=1}^{n_i}w_{ij}^0
\ + \
\sum_{i=1}^m u_i^0.
\end{eqnarray*} 
Let $\mu \in \ZZ_{>1}$, assume 
$w_{ij}^0 \le \mu$ whenever $n_i > 1$,
$1 \le j \le n_i$ and $u_k^0 \le \mu$ 
for $1 \le k \le m$ and 
set $d := n_0+n_1+n_2+m-2$.
Depending on the shape of $g$, 
one obtains the following bounds.
\begin{enumerate}
\item[(i)]
Suppose that
$g
= 
\eta_0 T_{01}^{l_{01}}  \cdots  T_{0n_0}^{l_{0n_0}} 
+  
\eta_1 T_{11}^{l_{11}}
+
\eta_2 T_{21}^{l_{21}}$ 
with $n_0 > 1$ and coefficients 
$\eta_i \in \KK^*$ holds.
If we have $l_{11} > l_{21} \ge 2$
and $\gcd(l_{11},l_{21})\mid \mu$,
then one has
$$ 
w_{11}^0\ < \ 2d\mu,
\quad
w_{21}^0\ < \ 3d\mu, 
\quad
\quad
l_{22}, l_{21},\deg (g)^0  \ < \ 6d\mu.
$$
If we have $l_{11}=l_{21}\geq 2$, then one has
$$
l_{11} \, ,w_{11}^0\, ,l_{21}\, , w_{21}^0\, ,
\deg (g)^0\ \leq\ \mu.
$$
\item[(ii)]
Suppose that
$g
=
\eta_0 T_{01}^{l_{01}}  \cdots T_{0n_0}^{l_{0n_0}}
+
\eta_1 T_{11}^{l_{11}} \cdots  T_{1n_1}^{l_{1n_1}}
+
\eta_2 T_{21}^{l_{21}}$
with $n_1 > 1$ and coefficients 
$\eta_i \in \KK^*$ holds and 
we have $l_{21} \ge 2$.
Then one has
$$
\qquad\qquad
w_{21}^0
\ < \ 
(d+1)\mu,
\qquad\qquad
\deg (g)^0 
\ < \ 
2(d+1)\mu.
$$
\end{enumerate}
\end{lemma}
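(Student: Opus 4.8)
The plan is to read everything off from the $K$-homogeneity of $g$ together with the displayed Fano-type inequality. Homogeneity forces the three monomials of $g$ to carry one common degree $\gamma := \deg(g) \in K$; passing to free parts yields the system $\gamma^0 = \sum_{j=1}^{n_0} l_{0j}w_{0j}^0 = l_{11}w_{11}^0 = l_{21}w_{21}^0$ in case (i), and its analogue with a middle sum $\sum_{j} l_{1j}w_{1j}^0$ in case (ii). The hypotheses $w_{0j}^0 \le \mu$ (together with $w_{1j}^0 \le \mu$ in case (ii)) and $u_k^0 \le \mu$ bound every term on the right of the inequality that does \emph{not} come from a singleton block; counting variables and using $d = n_0+n_1+n_2+m-2$, these terms sum to at most $d\mu$ in case (i) and at most $(d+1)\mu$ in case (ii). Thus the inequality isolates the genuinely unbounded weights $w_{11}^0,w_{21}^0$ (case i), respectively $w_{21}^0$ (case ii).

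Case (ii) is then immediate: substituting $\gamma^0 = l_{21}w_{21}^0$ while keeping $w_{21}^0$ on the right gives $(l_{21}-1)w_{21}^0 < (d+1)\mu$, so $l_{21}\ge 2$ yields $w_{21}^0 < (d+1)\mu$, and then $\gamma^0 < (d+1)\mu + w_{21}^0 < 2(d+1)\mu$.

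For the main subcase of (i), namely $l_{11} > l_{21}\ge 2$, I would set $g_{12} := \gcd(l_{11},l_{21})$ and write $l_{11}=g_{12}\lambda_1$, $l_{21}=g_{12}\lambda_2$ with coprime $\lambda_1 > \lambda_2 \ge 1$. Coprimality turns $l_{11}w_{11}^0 = l_{21}w_{21}^0$ into $w_{11}^0 = \lambda_2 c$, $w_{21}^0 = \lambda_1 c$ for some $c\in\ZZ_{>0}$, whence $\gamma^0 = g_{12}\lambda_1\lambda_2 c$ and the Fano inequality collapses to $cD < d\mu$ with $D := g_{12}\lambda_1\lambda_2 - \lambda_1 - \lambda_2$. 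The first task is to check $D \ge 1$ (a short split on $g_{12}=1$ versus $g_{12}\ge 2$, using $l_{21}=g_{12}\lambda_2\ge 2$). Dividing $cD<d\mu$ by $\lambda_1$ and by $\lambda_2$ then isolates $w_{21}^0$ and $w_{11}^0$ with coefficients $g_{12}\lambda_2 - 1 - \lambda_2/\lambda_1$ and $g_{12}\lambda_1 - 1 - \lambda_1/\lambda_2$. Bounding these coefficients from below is the heart of the argument and where I expect the most friction: the extremal configurations are $(g_{12},\lambda_2)=(1,2)$ with $\lambda_1=3$, giving coefficient $1/3$ and hence $w_{21}^0 < 3d\mu$, and $g_{12}=1,\ \lambda_2=2$, giving coefficient $1/2$ and hence $w_{11}^0 < 2d\mu$. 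Once these hold, $\gamma^0 = cD + (\lambda_1+\lambda_2)c < d\mu + w_{11}^0 + w_{21}^0 < 6d\mu$, and $l_{11}=\gamma^0/w_{11}^0$, $l_{21}=\gamma^0/w_{21}^0$ are both at most $\gamma^0 < 6d\mu$ (the symbol $l_{22}$ in the statement should read $l_{11}$).

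The remaining subcase $l_{11}=l_{21}=:l$ is where the torsion of $K$ must enter, since the Fano inequality alone degenerates. Homogeneity already forces $w_{11}^0 = w_{21}^0 =: w$, and the hypothesis $\gcd(l_{11},l_{21})\mid\mu$ gives $l\mid\mu$, so $l\le\mu$. To bound $w$ and $\gamma^0 = lw$, I would pass to the geometry of Construction \ref{Con:varX}: the Cox coordinate $\overline x$ whose only nonzero entries are those of $T_{11},T_{21}$ lies in $\widehat X$ (the equation $\eta_1 T_{11}^{l} + \eta_2 T_{21}^{l}=0$ is solvable), and there $W_{\overline x}=\{w_{11},w_{21}\}$. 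Hence Proposition \ref{Prop:FanoPicard}(i)--(ii) shows that $w=\gcd(w_{11}^0,w_{21}^0)$ divides the factor $\lcm_x(\gcd(w^0;\,w\in W_{\overline x}))$ of the Picard index, while Lemma \ref{ggTPicind} shows $l=\gcd(\ell_1,\ell_2)$ divides $|\Cl(X)^t|$. Writing $\mu = \lcm_x(\cdots)\cdot|\Cl(X)^t|$ and using that $w$ and $l$ divide these two independent factors gives $\mu \ge w\cdot l = \gamma^0$, so $w,l,\gamma^0 \le \mu$. The overall obstacle is thus twofold: the extremal bookkeeping for small $g_{12},\lambda_2$ in the generic subcase, and recognising in the degenerate subcase that the bound on $\gamma^0$ must be extracted from the \emph{product} factorisation $\mu=\lcm_x(\cdots)\cdot|\Cl(X)^t|$ rather than from the Fano inequality.
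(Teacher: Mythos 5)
Your proof is correct, and it coincides with the paper's in case (ii), but in case (i) the two arguments genuinely diverge. For the generic subcase $l_{11}>l_{21}\ge 2$ the paper is far more direct: since $l_{11}\ge 3$ and $l_{21}\ge 2$, the inequality $l_{11}w_{11}^0=l_{21}w_{21}^0=\deg(g)^0<d\mu+w_{11}^0+w_{21}^0$ immediately gives $2w_{11}^0<d\mu+w_{21}^0$ and $w_{21}^0<d\mu+w_{11}^0$, and substituting each into the other yields $w_{11}^0<2d\mu$, $w_{21}^0<3d\mu$, hence $\deg(g)^0<6d\mu$ and $l_{11},l_{21}\le\deg(g)^0$; the hypothesis $\gcd(l_{11},l_{21})\mid\mu$ is never used there. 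Your factorisation $l_{11}=g_{12}\lambda_1$, $l_{21}=g_{12}\lambda_2$ with $\gcd(\lambda_1,\lambda_2)=1$, the reduction to $cD<d\mu$ with $D=g_{12}\lambda_1\lambda_2-\lambda_1-\lambda_2\ge 1$, and the extremal analysis (minimum coefficients $1/3$ for $w_{21}^0$ and $1/2$ for $w_{11}^0$, attained at $(g_{12},\lambda_2,\lambda_1)=(1,2,3)$) are all correct and produce the same constants, but they cost a case discussion that the paper's two-line bootstrap avoids. In the degenerate subcase $l_{11}=l_{21}$, by contrast, your argument is more complete than the paper's: the paper asserts $w_{11}^0=w_{21}^0\mid\mu$ and concludes ``consequently'' that $\deg(g)^0\le\mu$, although divisibility of $l$ and of $w$ into $\mu$ separately does not give $lw\le\mu$. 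You correctly observe that this subcase cannot be settled from the lemma's stated hypotheses alone (for $l_{11}=l_{21}=2$ the degree inequality imposes no bound at all on $w_{11}^0$) and that the missing input is the product structure $\mu=\lcm_x\bigl(\gcd(w^0;\,w\in W_{\overline x})\bigr)\cdot|\Cl(X)^t|$ of Proposition \ref{Prop:FanoPicard}(ii): the point with only the $T_{11},T_{21}$ coordinates nonzero shows that $w$ divides the lcm factor, Lemma \ref{ggTPicind} shows that $l$ divides $|\Cl(X)^t|$, and hence $\deg(g)^0=lw$ divides $\mu$. This is precisely the justification the paper's one-line assertion needs; the only caveat, which affects the paper's proof equally, is that when the lemma is applied to a cut ring of a variety with three or more relations, the auxiliary point with only $T_{11},T_{21}$ nonzero need no longer lie in $\widehat X$, so this contextual step must be formulated with care. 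Finally, you are right that $l_{22}$ in the statement is a typo for $l_{11}$.
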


\begin{proof}
We prove~(i). Set for short
$c := (n_0+m)\mu = d\mu$.
Then, using homogeneity of $g$ 
and the assumed inequality, we obtain
$$
l_{11}w_{11}^0
\ = \ 
l_{21}w_{21}^0
\ = \ 
\deg (g)^0
\ < \ 
\sum_{i=0}^2\sum_{j=1}^{n_i}w_{ij}^0
+
\sum_{i=1}^m u_i ^0
\ \le \ 
c+w_{11}^0+w_{21}^0.
$$
First have a look at the case
$l_{11} > l_{21} \ge 2$.
Plugging this into the above inequalities, 
we arrive at 
$2 w_{11}^0 < c + w_{21}^0$ and 
$w_{21}^0 < c + w_{11}^0$.
We conclude $w_{11}^0 < 2c$ and $w_{21}^0 < 3c$.
Consequently we obtain
$$
\deg (g)^0 
\ <  \ 
c + w_{11}^0 + w_{21}^0 
\ <  \ 
6c=6d\mu.
$$
If we have $l_{11}=l_{21}$, the homogeneity condition
$l_{11}w_{11}^0=l_{21}w_{11}^0$ gives us  $w_{11}^0=w_{21}^0$.
Thus we have $\gcd(w_{11}^0,w_{21}^0)=w_{11}^0=w_{21}^0\mid\mu$
and by assumption 
$\gcd(l_{11},l_{21})=l_{21}=l_{11}\mid\mu$.
Consequently
$l_{11},w_{11}^0 ,l_{21} , w_{21}^0 ,\deg (g)^0\leq\mu$.
\\
We prove (ii).
Here we set $c := (n_0+n_1+m)\mu = (d+1)\mu$.
Then the assumed inequality gives
$$
l_{21}w_{21}^0
\ = \ 
\deg (g)^0
\ < \
\sum_{i=0}^1\sum_{j=1}^{n_i}w_{ij}^0+
\sum_{i=1}^m u_i^0+ w_{21}^0
\ \le \
c+w_{21}^0.
$$
Since we assumed $l_{21} \geq 2$, 
we can conclude $w_{21}^0 < c$.
This in turn gives us
$\deg (g)^0 < 2c$.
\end{proof}

\begin{proof}[Proof of Theorem \ref{Th:FiniteIndex}]
As before, we denote by $\overline{X} \subseteq \KK^{n+m}$
the total coordinate space and we consider the quotient
$p \colon \widehat{X} \to X$.

We first discuss the case that 
$X$ is a toric variety.
Then the Cox ring is a polynomial ring,
$\mathcal R(X) = \KK[S_1,\ldots,S_m]$.
For each $1 \le k \le m$, 
consider the point
$\overline x(k) \in \widehat{X}$
having the $k$-th coordinate one 
and all others zero and 
set $x(k) := p(\overline {x}(k))$.
Then, by~Proposition~\ref{Prop:FanoPicard},
the order of the local class group
$\Cl(X,x(k))$ is divisible by $u_k^0$.
Together with Proposition \ref{Prop:FanoPicard}(ii)
we get
$u_k^0\le \mu$ for $1 \le k \leq m$ and $|\Cl(X)^t|\le\mu $
which settles assertion~(i).

Now we treat the non-toric case,
which means $r \ge 2$.
Note that we have $n \ge 3$.
The case $n_0=1$ is done in 
Proposition~\ref{prop:Finite3Var},
which proves assertion (ii).
Hence, we are left with $n_0>1$.
For every $i$ with $n_i > 1$
and every $1 \le j \le n_i$,
there is the point $\overline{x}(i,j) \in \widehat{X}$
with $ij$-coordinate $T_{ij}$ equal 
to one and all others equal to
zero, and thus we have the point
$x(i,j) := p(\overline{x}(i,j)) \in X$.
Moreover, for every $1 \le k \le m$, we have 
the point $\overline{x}(k) \in \overline{X}$
having the $k$-coordinate $S_k$ equal to one 
and all others zero; we 
set $x(k):=p(\overline{x}(k))$.
Proposition~\ref{Prop:FanoPicard}
provides the bounds
\begin{equation}\label{ie1}
w_{ij}^0
\ \le \ 
\mu,
\quad
u_k^0
\ \le \  
\mu
\qquad
\text{for }
\quad 
n_i > 1, \,
0\leq i\leq r, \,
1 \le j \le n_i, \, 
1 \le k \le m.
\end{equation}

Let $0 \le s \le r$ be the maximal number with 
$n_{s} > 1$. Then $g_{s-2}$ is the last 
polynomial such that each of its three monomials
depends on more than one variable.
For any $t \ge s$, we have the ``cut ring'' 
\begin{eqnarray*}
R_t
& := &
\KK[T_{ij},S_k]
\ / \
\bangle{g_0, \ldots,g_{t-2}}
\end{eqnarray*}
where  
$0 \le i \le t$, $1 \le j \le n_i$, $1\le k\le m$
and the relations $g_{i}$ depend on
only three variables as soon as $i > s$ holds.
For the free part of the degree $\gamma^0$ of the relations we have 
\begin{eqnarray*}
(r-1)\gamma^0
& = & 
(t-1)\gamma^0 \ + \ (r-t)\gamma^0
\\
& = &
(t-1)\gamma^0 \ + \ l_{t+1,1}w_{t+1,1}^0 + \ldots + l_{r1}w_{r1}^0
\\
& < & 
\sum_{i=0}^r\sum_{j=1}^{n_i}w_{ij}^0
\ + \ 
\sum_{i=1}^m u_i^0
\\
& = & 
\sum_{i=0}^t \sum_{j=1}^{n_i}w_{ij}^0
\ + \ 
w_{t+1,1}^0+ \ldots + w_{r1}^0
\ + \ 
\sum_{i=1}^m u_i^0.
\end{eqnarray*}
Note that the inequality is derived from
the Fano condition of Proposition \ref{Prop:FanoPicard}(iv).
Since $l_{i1}w_{i1}^0 > w_{i1}^0$ holds in 
particular for $t+1 \le i \le r$,
we derive from this the inequality
\begin{eqnarray}\label{ie2}
\gamma^0
& < &
\frac{1}{t-1}
\left(
\sum_{i=0}^t\sum_{j=1}^{n_i}w_{ij}^0
\ + \ 
\sum_{i=1}^m u_i^0
\right).
\end{eqnarray}
To obtain the bounds in 
assertions~(iii) and~(iv),
we consider the cut ring $R_t$ 
with $t=2$ and apply
Lemma~\ref{Lem:1relation} and Proposition \ref{Prop:FanoPicard};
note that we have 
$d = n_0+n_1+n_2+m-2$ 
for the dimension $d = \dim(X)$
and that $l_{21} \ge 2$ is due 
to the fact that $X$ is non-toric.
The bounds $w_{i1}^0, l_{i1} < 6d\mu$ 
for $3 \le i \le r$ in 
assertion~(iii) follow from 
$\gamma^0 < 6d\mu$.
Similarly  $w_{ij}^0,l_{ij} < 2(d+1)\mu$ 
for $0\leq i\leq r$, $1\leq j\leq n_i$
in assertion~(iv) follow from 
$ \gamma^0 < 2(d+1) \mu$.
We still have to prove the restriction for the number of relations,
which means bounding $r$.
Recall from Lemma \ref{ggTPicind}
the definition $\ell_i:=\gcd(l_{i1},\ldots,l_{in_i})$
and set $\ell_i=\lcm_{0\leq j\neq i\leq r}(\gcd(\ell_i,\ell_j))\cdot \ell_i'$. 
Then $\ell_0',\ldots, \ell_r'$ are coprime.
For $i\geq 1$ we have $n_i=1$. 
Thus analogously to the proof of 
Proposition \ref{prop:Finite3Var} we get
$r+1=r+1-q+q\le\mu+\xi(6d\mu)$ 
where $q$ is the number of $\ell_i'$ 
that are greater than one and satisfy $n_i=1$.
For the bound in assertion~(iv)
the same argument yields
$r+1=r+1-q+q\le\mu+\xi(2(d+1)\mu)$.

To obtain the bounds in assertion~(v),
we consider the cut ring $R_t$ 
with $t=s$. 
Using $n_i=1$ for $i \ge t+1$
and applying the inequalities \ref{ie1} and \ref{ie2}, 
we can derive an upper bound for
the degree of the relation
as follows:
$$
\gamma^0
\ < \
\frac{(n_0 + \ldots + n_t + m) \mu}{t-1}
\ = \ 
\frac{(d + t) \mu}{t-1}
\ \le \ 
(d + 2) \mu.
$$
We have $w_{ij}^0l_{ij} \le \gamma^0$ 
for any $0 \le i \le r$ and any $1 \le j \le n_i$,
which implies that all $w_{ij}^0$ and $l_{ij}$ 
are bounded by $(d+2)\mu$.
Since $n_0,\ldots,n_{s-1}>1$ holds, 
the number $s$ is bounded by
$s=2s-(s-1)-1\le d$.
Consequently we obtain
$
r+1=r+1-s-q+s+q\le\mu+\xi((d+2)\mu)+d,
$
where $q$ is defined as above.

Finally, we have to express the Picard index
$\mu$ in terms of the free part of the 
weights $w_{ij}^0$, $u_k^0$
and the torsion part $\Cl(X)^t$
as claimed in the assertions.
This is a direct application of the formula of 
Proposition~\ref{Prop:FanoPicard}.
\end{proof}

\begin{proof}[Proof of Theorem
\ref{Th:asymptotic}]
Theorem \ref{Th:FiniteIndex} provides bounds
for the exponents and the number of relations
as well as for the free part of the 
weights and the torsion part of $\Cl(X)$.
Since we have $|\Cl(X)^t|\leq \mu$ the possibilities for
the torsion part of the 
weights are also restricted.
One computes that the
number $\delta(d,\mu)$ of different deformation types 
is bounded above by
$$
\mu^{\mu^2+3\mu+\xi(\mu)^2+\xi(6d\mu)+5d}
(6d\mu)^{2\mu+2\xi(6d\mu)+3d-2} 
$$
which leads to the results of
Theorem \ref{Th:asymptotic}.
\end{proof}

\begin{proof}
[Proof of Theorems \ref{Theo:surfaces<=6} and \ref{thm:3fano2}]
For fixed $d$ and $\mu$ 
Theorem  \ref{Th:FiniteIndex} bounds the number
of possible data $l_{ij}$, $w_{ij}^0$, $u_k^0$,
belonging to Fano varieties.
We identify all these constellations by a 
computer based algorithm.
Since $|\Cl(X)^t|\leq \mu$ holds,
there is only a finite number of possibilities
for the torsion part of the 
weights
that we have to check.
By this procedure we obtain the tables
of \ref{Theo:surfaces<=6} and \ref{thm:3fano2}.

We claim that any two of the listed Cox rings
do not describe varieties that are isomorphic 
to each other.
Two minimal systems of homogeneous generators
of the Cox ring contain (up to reordering) 
the same free parts of generator degrees
$w_{ij}^0$, $u_k^0\in\ZZ$.
Consequently 
they are invariant 
under isomorphy.
Furthermore the exponents $l_{ij}>1$ represent the orders
of all finite non-trivial isotropy groups
of one-codimensional orbits of the action $T$ on $X$;
see \cite[Theorem~1.3]{HaSu}.
Moreover, since none of the listed Cox rings is 
polynomial the varieties are all non-toric.
This implies that every complexity-one action
is maximal and consequently can be assigned to a maximal torus in $\Aut(X)$.
Note that $\Aut(X)$ is also acting effectively on $X$.
Since the maximal tori of $\Aut(X)$ are all conjugated the
varieties with complexity-one torus action are 
isomorphic if and only if they are 
$T$-equivariantly isomorphic.
Thus, running through the exponents $l_{ij}$
we see that any two of the varieties 
listed in Theorem \ref{Theo:surfaces<=6} are not isomorphic.

In case of Theorem \ref{thm:3fano2} there is some 
more work to do. 
There are not isomorphic threefolds
varying only in the torsion part of the 
weights,
see for example number $2$, $3$ and $4$.
In these cases,
comparing the torsion parts of the gradings
shows that it is not possible to install a
$\Cl(X)$-graded ring isomorphism between
the Cox rings of  two different threefolds.

As an example we consider
the threefolds number $2$ and $3$:
Let $D_{2}$ be a prime divisor, 
representing $\deg(T_2)\in\Cl(X)$
and let $E_1$ be a prime divisor, 
representing  $\deg(S_1)\in\Cl(X)$ .
Then  $D_{2}$ has isotropy group of order $l_{2}=3$
and  $E_1$ has infinite isotropy.
In case of threefold number $2$ the term
$D_{2}-E_1$ represents a non-trivial torsion element
whereas in case of threefold number $3$
it is the zero element in $\Cl(X)$.
Thus, these two varieties are not isomorphic.
Analogously we proceed with all other cases
to obtain finally the list of Theorem \ref{thm:3fano2}.

Finally, we apply \cite[Corollary 4.9]{Ha2} to compute
the Gorenstein index $\iota(X)$ for all listed varieties, i.e.
we have to find the smallest integer $\iota(X)$
such that  $\iota(X)\cdot K_X$ is contained in all local
divisor class groups $\Cl(X,x)$; 
see also Proposition \ref{Prop:FanoPicard}.
\end{proof}

\begin{acknowledgements}
The author would like to thank J\"urgen Hausen
and the referees
for their valuable comments, remarks and references
that improved this article a lot.
\end{acknowledgements}


\begin{thebibliography}{}
%
%
\bibitem{ArDeHaLa} 
I.~Arzhantsev, U.~Derenthal, J.~Hausen, A.~Laface:
Cox rings, arXiv:1003.4229, see also extended version on the authors' 
webpages.
%
\bibitem{Ba}
V.V.~Batyrev:
Toric Fano threefolds.
Izv. Akad. Nauk SSSR Ser. Mat. 45 (1981), no. 4,704-717, 927.
%
\bibitem{Ew}
G.~Ewald: 
Combinatorial convexity and algebraic geometry.
Grad. Texts in Math., vol.168, Springer Verlag, New york, 1996.
%
\bibitem{IaFl}
A.R.~Iano-Fletcher:
Working with weighted complete intersections.
Explicit birational geometry of $3$-folds, 101-173,
London Math. Soc. Lecture Note Ser., 281,
Cambridge Univ. Press, Cambridge, 2000.
%
\bibitem{Ha2}
J.~Hausen:
Cox rings and combinatorics II.
Mosc. Math. J. 8 (2008), no.~4,  711--757.
%
\bibitem{Ha3}
J.~Hausen:
Three lectures on Cox rings.
arXiv:1106.0854 (2011)
%
\bibitem{HaHe} 
J.~Hausen, E.~Herppich:
Factorially graded rings of complexity one.
arXiv: 1005.4194v1 (2010).
%
\bibitem{HaHeSu}
J.~Hausen, E.~Herppich, H.~S\"u{\ss}:
Multigraded factorial rings and Fano varieties
with torus action.
Documenta Math. 16 (2011), 71--109.
%
\bibitem{HaSu}
J.~Hausen, H.~S\"u{\ss}:
The Cox ring of an algebraic variety 
with torus action.
Advances Math. 225 (2010), 977--1012.
%
\bibitem{Is}
V.A. Iskovskih: Fano threefolds. II.
 Izv. Akad. Nauk SSSR Ser.Mat.42 (1978), no. 3, 506-549.
%
\bibitem{Ka}
A.~Kasprzyk:
Bounds on fake weighted projective spaces.
Kodai Math. J. 32 (2009), no.2, 197-208.
%
\bibitem{Ka2}
A.~Kasprzyk:
Canonical toric Fano threefolds.
Canad. J. Math. 62 (2010), no.6, 1293-1309.
%
\bibitem{KaKrNi}
A.~Kasprzyk, M.~Kreuzer, B.~Nill:
On the combinatorial classification 
of toric log del Pezzo surfaces.
LMS J. Comput. Math. 13 (2010), 33-46.
%
\bibitem{MoMu}
S.~Mori, S.~Mukai:
Classification of Fano $3$-folds with $B_2\geq 2$.
Manuscripta Math. 36 (1981/82), no.2, 147-162.
%
\bibitem{Sum}
H.~Sumihiro: 
Equivariant completion. 
J. Math. Kyoto Univ. 14 (1974), 1-28.
%
\bibitem{Su}
H.~S\"u{\ss}:
Canonical divisors on $T$-varieties.
Preprint, arXiv:0811.0626v1.
\end{thebibliography}
\end{document}